\pgfplotsset{compat=1.15}
\newtheorem{Thm}{Theorem}[section]
\newtheorem{Lem}[Thm]{Lemma}
\newtheorem{Prop}[Thm]{Proposition}
\newtheorem{Cor}[Thm]{Corollary}
\newtheorem{Conj}[Thm]{Conjecture}
\newtheorem{problem}[Thm]{Problem}
\theoremstyle{definition}
\newtheorem{Def}[Thm]{Definition}
\theoremstyle{remark}
\newtheorem{Rem}[Thm]{Remark}
\def\moverlay{\mathpalette\mov@rlay}
\def\mov@rlay#1#2{\leavevmode\vtop{%
   \baselineskip\z@skip \lineskiplimit-\maxdimen
   \ialign{\hfil$\m@th#1##$\hfil\cr#2\crcr}}}
\newcommand{\charfusion}[3][\mathord]{
    #1{\ifx#1\mathop\vphantom{#2}\fi
        \mathpalette\mov@rlay{#2\cr#3}
      }
    \ifx#1\mathop\expandafter\displaylimits\fi}
\newcommand{\Pfin}{\mathfrak P_{\mathrm{fin}}}
\newcommand{\Pthree}{\mathfrak P_{3}}
\newcommand{\structA}{{\mathbb A}}
\newcommand{\structB}{{\mathbb B}}
\newcommand{\structC}{{\mathbb C}}
\newcommand{\structS}{{\mathbb S}}
\newcommand{\struct}[1]{\mathbb{#1}}
\newcommand{\arrC}{{\downarrow\!\overline{\cloC_3}}}
\newcommand{\NN}{{\mathbb N}}
\newcommand{\cloA}{{\mathcal A}}
\newcommand{\cloB}{{\mathcal B}}
\newcommand{\cloC}{{\mathcal C}}
\newcommand{\cloD}{{\mathcal D}}
\newcommand{\cloI}{{\mathcal I}}
\newcommand{\cloS}{{\mathcal S}}
\newcommand{\eqm}{{\equiv_{\mathrm{m}}}}
\newcommand{\leqm}{{\preceq_{\mathrm{m}}}}
\newcommand{\nleqm}{{\npreceq_{\mathrm{m}}}}
\newcommand{\leqcon}{{\leq_{\mathrm{Con}}}}
\newcommand{\Malcev}{\Sigma_{\operatorname{M}}}
\DeclareMathOperator{\pr}{pr}
\DeclareMathOperator{\CSP}{CSP}
\DeclareMathOperator{\Pol}{Pol}
\DeclareMathOperator{\Inv}{Inv}
\DeclareMathOperator{\GM}{GM}
\begin{document}

\title{Submaximal clones over a three-element set up to minor-equivalence}
\titlemark{Submaximal clones over a three-element set up to minor-equivalence}

\emsauthor{1}{Albert Vucaj}{A.~Vucaj}

\emsauthor{2}{Dmitriy Zhuk}{D.~Zhuk}

\emsaffil{1}{Institute of Discrete Mathematics and Geometry, Technische Universit\"at Wien, Wien, Austria
\email{albert.vucaj@tuwien.ac.at}}

\emsaffil{2}{Department of Algebra, Faculty of Mathematics and Physics, Charles University\\ Prague, Czechia
\email{zhuk.dmitriy@gmail.com}}

\classification{03B50, 08A70, 08B05}

\keywords{Clones, minor homomorphisms, three-valued logic, primitive positive construction, linear Mal'cev conditions}

\begin{abstract}
We study clones
modulo minor homomorphisms, which are mappings from one clone to another preserving arities of operations 
and respecting permutation and identification of variables. 
Minor-equivalent clones satisfy the same sets of identities of the form $f(x_1,\dots,x_n)\approx g(y_1,\dots,y_m)$, also known as minor identities, and therefore share many algebraic properties.
Moreover, it was proved that the complexity of the $\operatorname{CSP}$ of a finite structure $\struct{A}$ only depends on the set of minor identities satisfied by the polymorphism clone of $\struct{A}$.
In this article we consider the poset that arises by considering all clones over a three-element set with the following order: we write $\cloC\ \leqm\ \cloD$ if there exists a minor homomorphism from $\cloC$ to $\cloD$. We show that the aforementioned poset has only three submaximal elements.
\end{abstract}

\maketitle

\section{Introduction}\label{sec:Intro}
In 1959 Janov and Mu\v{c}nik~\cite{3elem} proved that there exists a continuum of clones over a $k$-element set, for every $k \geq 3$. Thus, the goal to achieve a classification à la Post~\cite{Post} for clones over a three-element set seemed to falter. Subsequently, researchers in universal algebra focused on understanding particular aspects of clone lattices on finite domains. As far as concerns clones over $\{0,1,2\}$, remarkable results in this direction are the description of all \emph{maximal clones}~\cite{JabMaximal} and of all \emph{minimal clones}~\cite{Csakany84}. Moreover, it turned out that all the aforementioned maximal clones, with the sole exception of the clone of all linear operations, contain a continuum of subclones~\cite{DemetrHannak,Marchenkov}. More recently, a complete description of all clones of self-dual operations over a three-element set was provided~\cite{Zhuk15}. Note that this is a remarkable result since the clone of all self-dual operations, which we denote by $\cloC_3$, is one of the maximal clones over $\{0,1,2\}$; thus $\cloC_3$ is the first maximal clone besides the clone of all linear operations that has such description. In particular, $\cloC_3$ is the only maximal clone which has a full description of all its subclones, despite having continuum many of them. Another result that seems to be a setback in the research-line aimed at describing the entire lattice of clones over $\{0,1,2\}$ is the following: it is undecidable whether a given clone over a finite domain is \emph{finitely related}~\cite{mooreFinRelated}. 

One might still hope to classify all operation clones over finite sets up to some equivalence relation so that equivalent clones share many of the properties that are of interest in universal algebra.
Recently, Barto, Opr\v{s}al, and Pinsker~\cite{wonderland} introduced a weakening of the notion of clone homomorphism on the class of clones over a finite set, known in the literature as \emph{minor homomorphism}. We write $\cloC\ \leqm\ \cloD$ if there exist a minor homomorphism from $\cloC$ to $\cloD$, that is, a map preserving arities and taking minors, where a \emph{minor} of an operation $f$ is an operation obtain from $f$ by permuting its variables, identifying variables, or by adding dummy variables (see Definitions~\ref{def:minor} and~\ref{def:minorpreservingmaps}). Moreover, we write $\cloC\ \eqm\ \cloD$ if $\cloC\ \leqm\ \cloD$ and $\cloD\ \leqm\ \cloC$ and say that $\cloC$ and $\cloD$ are \emph{minor-equivalent}; by $\overline{\cloC}$ we denote the $\eqm$-class of $\cloC$, i.e., the class of all clones over some finite set which are minor-equivalent to $\cloC$. The relation $\leqm$ is a reflexive and transitive relation on the class of clones over a finite set, hence $\eqm$ is indeed an equivalence relation, thereby making the use of these suggestive symbols justified. Minor-equivalent clones satisfy the same sets of identities of a particular form, known as \emph{minor conditions} (see Section~\ref{sec:ourposet}). In a recent turn of events, it turned out that the complexity of the Constraint Satisfaction Problem of $\structA$ ($\CSP(\structA)$), where $\structA$ is a finite relational structure with finite signature, only depends on the set of  minor identities satisfied by $\Pol(\structA)$, i.e., by the \emph{polymorphism clone} of $\structA$. Moreover, the relation $\leqm$ preserves the complexity CSPs, for short: if $\Pol(\structA)\ \leqm\ \Pol(\structB)$ then there exists a log-space reduction from $\CSP(\structB)$ to $\CSP(\structA)$~\cite{wonderland}.

In this article we focus on the set of all clones over $\{0,1,2\}$ ordered with respect to $\leqm$. More precisely, we describe the submaximal elements of the poset $\mathfrak{P}_3 \coloneqq(\{\overline{\mathcal{C}} \mid \mathcal C\text{ is a clone over } \{0,1,2\}\};\leqm)$.
A full description of the subposet $\arrC$ which contains all the elements of $\Pthree$ which are smaller than $\overline{\cloC_3}$, with respect to $\leqm$, was provided in~\cite{BodirskyVucajZhuk}. From the latter description it follows that $\arrC$ is a countably infinite lattice. In the same article (\cite{BodirskyVucajZhuk}, Conjecture~6.2) it was conjectured that $\Pthree$ has exactly three submaximal elements, namely the $\eqm$-classes of the following three clones:
\begin{align*}
        \cloC_2 &\coloneqq \Pol\big((\{0,1\};\{(0,1),(1,0)\})\big),
        \\\cloC_3 &\coloneqq \Pol\big((\{0,1,2\};\{(0,1),(1,2),(2,0)\})\big),
        \\\cloB_2 &\coloneqq \Pol\big((\{0,1\};\{0\},\{1\},\{(0,1),(1,0),(1,1)\})\big).    
\end{align*}
Note that $\cloC_2$ and $\cloB_2$ are clones over $\{0,1\}$; we could equivalently consider the $\eqm$-classes of the following clones:
\begin{align*}
\cloC'_2 &\coloneqq\Pol\big((E_3;\{(0,1),(1,0)\})\big) \mbox{ and }\\\cloB'_2&\coloneqq\Pol\big((E_3;\{0\},\{1\},\{(0,1),(1,0),(1,1)\})\big),    
\end{align*} 
where $E_3\coloneqq\{0,1,2\}$.

\subsection*{Contributions} We give a positive answer to the aforementioned conjecture: we show that $\Pthree$ has exactly three submaximal elements, namely $\overline{\cloC_2}$, $\overline{\cloC_3}$, and $\overline{\cloB_2}$ (see Corollary~\ref{thm:main}). Our proof is of syntactic nature: we prove several statements that entail the existence of operations satisfying suitable identities in some clone $\cloC$, provided that $\cloC$ has certain operations. For example, we prove that if $\cloC$ is an idempotent clone over an $n$-element set, for some $n\geq 2$, such that $\cloC$ has a \emph{Mal'cev operation} and a \emph{cyclic operation of arity} $p$, for every prime $p\leq n$, then $\cloC$ has a \emph{majority operation} (see Lemma~\ref{lem:majorityWithKey}).
Statements of this form constitute results of independent interest in universal algebra.

\section{Preliminaries}\label{sec:preliminaries}
In this section we present notation, definitions, and some basic results from the literature which we are going to use throughout the article.

\subsection{A Galois connection for clones}
We denote the set $\{0,\dots,k-1\}$ by $E_k$. For $n \in {\mathbb N}$, we define 
\begin{align*}
    {\mathcal O}^{(n)}_k \coloneqq \{f\mid f\colon E_k^n\to E_k\}&& \text{and} && {\mathcal O}_k \coloneqq \bigcup_{n \in {\mathbb N}} {\mathcal O}^{(n)}_k.
\end{align*}
A \emph{clone over $E_k$} is a subset $\cloC$ of ${\mathcal O}_k$ which is closed under composition of operations and which contains all projections, i.e., operations of the form $\pr^{n}_{i}\colon (a_1,\dots,a_n)\mapsto a_i$, for all $a_1,\dots,a_n\in E_k$. If $F \subseteq {\mathcal O}_k$, then $\langle F\rangle$ denotes the \emph{clone generated by $F$}, i.e., the smallest clone that contains~$F$. 

An alternative way of describing a clone of operations is to specify the clone as the set all operations preserving a given set of relations. We say that an $n$-ary operation $f$ \emph{preserves a relation} $R$ on a finite set $A$ if, for every $a_1,\dots,a_n \in R$, it holds that $f (a_1,\dots,a_n) \in R$. In this case we also say that $R$ is \emph{invariant under} $f$. If $f$ preserves all the relations in $\Gamma$, we say that $f$ is a \emph{polymorphism of} $\Gamma$.

Let $\Gamma$ be a set of relations over $E_k$, for some $k\in\NN$. We define the set
\begin{align*}
    \Pol(\Gamma) \coloneqq \bigcup_{n\in {\mathbb N}}\Big\{f\colon E_k^n\to E_k \mid f \text{ preserves all the relations in } \Gamma\Big\}.
\end{align*}
We call the set $\Pol(\Gamma)$ \emph{the polymorphism clone of} $\Gamma$.

Analogously, for every set of operations $F$ over $E_k$, we define
\begin{align*}
    \Inv(F) \coloneqq \bigcup_{n\in {\mathbb N}}\Big\{R\subseteq E_k^n \mid R \text{ is invariant under every operation in } F\Big\}.
\end{align*}

\begin{Thm}[\cite{BoKaKoRo,Geiger}]\label{thm:PolInv}
Let $F$ be a set of operations over a finite set. The following equality holds: $\Pol(\Inv(F)) = \langle F \rangle$.
\end{Thm}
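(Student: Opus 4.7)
The plan is to prove the two inclusions of $\Pol(\Inv(F)) = \langle F \rangle$ separately. The easy direction $\langle F\rangle\subseteq\Pol(\Inv(F))$ follows by general principles: by definition every $f\in F$ preserves every relation in $\Inv(F)$; all projections preserve every relation; and for any fixed relation $R$, the set of operations preserving $R$ is closed under composition, so $\Pol(\Inv(F))$ is itself a clone. Being a clone that contains $F$, it contains the clone generated by $F$.

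For the substantive inclusion $\Pol(\Inv(F))\subseteq\langle F\rangle$, fix an $n$-ary $f\in\Pol(\Inv(F))$ and set $k\coloneqq|E_k|$. The idea is to exhibit a single carefully chosen relation in $\Inv(F)$ that alone forces $f$ to lie in $\langle F\rangle$, by realising $f$ as a composition of operations from $F$. Concretely, I would enumerate all $n$-tuples over the domain as $E_k^n=\{a_1,\dots,a_{k^n}\}$ and form the $n$ ``tautological'' tuples
\begin{equation*}
r_i\coloneqq\bigl(a_1(i),\,a_2(i),\,\dots,\,a_{k^n}(i)\bigr)\in E_k^{k^n},\qquad i=1,\dots,n,
\end{equation*}
where $a_j(i)$ denotes the $i$-th coordinate of $a_j$. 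Let $R\subseteq E_k^{k^n}$ be the smallest subset containing $r_1,\dots,r_n$ and closed under componentwise application of every operation in $F$. By construction $R\in\Inv(F)$.

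The key step is then to describe $R$ explicitly: a tuple $b\in E_k^{k^n}$ lies in $R$ if and only if there is an $n$-ary $h\in\langle F\rangle$ with $b=(h(a_1),\dots,h(a_{k^n}))$. One direction is immediate induction on the construction of $R$ (the $r_i$ correspond to the projections $\pr^{n}_{i}$, and applying $g\in F$ componentwise to tuples of this form yields a tuple of this form, with $h$ replaced by the composition); the other direction follows because the set of tuples of this shape already contains $r_1,\dots,r_n$ and is closed under componentwise $F$, so it contains $R$. Once this description is in hand, applying $f\in\Pol(\Inv(F))$ to $R$ gives
\begin{equation*}
f(r_1,\dots,r_n)=\bigl(f(a_1),\dots,f(a_{k^n})\bigr)\in R,
\end{equation*}
so there exists $h\in\langle F\rangle$ with $h(a_j)=f(a_j)$ for every $j$. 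Since the $a_j$ enumerate all of $E_k^n$, the operations $h$ and $f$ agree everywhere, whence $f=h\in\langle F\rangle$.

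The main obstacle is the explicit description of $R$ in the middle paragraph: one must be careful to verify that composing $g\in F$ with tuples of the form $(h(a_1),\dots,h(a_{k^n}))$ again yields a tuple of that form with some $h'\in\langle F\rangle$, which amounts to checking that $\langle F\rangle$ is closed under the composition being used. Everything else — checking $R\in\Inv(F)$, invoking that $f$ preserves $R$, and concluding from pointwise agreement on all of $E_k^n$ — is routine.
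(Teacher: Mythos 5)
The paper states Theorem~\ref{thm:PolInv} as a classical result with a citation to Bodnar\v{c}uk--Kalu\v{z}nin--Kotov--Romov and Geiger and gives no proof of its own, so there is nothing internal to compare against. Your argument is the standard textbook proof of this direction of the Galois connection, and it is correct: the easy inclusion $\langle F\rangle\subseteq\Pol(\Inv(F))$ is handled by observing that $\Pol(\Inv(F))$ is a clone containing $F$, and the substantive inclusion is obtained by taking the relation $R\subseteq E_k^{k^n}$ generated by the $n$ ``graph'' tuples $r_1,\dots,r_n$ under componentwise action of $F$, identifying $R$ with $\{(h(a_1),\dots,h(a_{k^n})) : h\in\langle F\rangle^{(n)}\}$, and then using $f(r_1,\dots,r_n)\in R$ together with the fact that the $a_j$ exhaust $E_k^n$ to conclude $f\in\langle F\rangle$.

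One small expository remark: in your middle paragraph you announce a biconditional and claim to prove both directions, but the two sentences you give in fact both establish the same inclusion $R\subseteq T$, where $T\coloneqq\{(h(a_1),\dots,h(a_{k^n})):h\in\langle F\rangle^{(n)}\}$ — first by induction on the construction of $R$, then again by noting $T$ contains the generators and is closed under $F$. That is fine, since $R\subseteq T$ is the only inclusion the rest of the proof uses; the converse $T\subseteq R$ is true (by induction on the term defining $h$) but not needed. Also note that the finiteness of the domain is essential here, since otherwise $E_k^{k^n}$ would not be the arity of a finitary relation; you use it tacitly, which is acceptable given the theorem's hypothesis.
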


Let $\tau$ be a relational signature. A $\tau$-structure is a relational structure over the signature $\tau$.  
Let $\structA$ and $\structB$ be two relational $\tau$-structures. A map $h\colon A\to B$ is a \emph{homomorphism} if for every $R\in\tau$
\begin{equation*}
    \text{if } (a_1,\dots,a_n)\in R^\structA, \text{ then } (h(a_1),\dots,h(a_n))\in R^\structB.
\end{equation*}
We denote by $\operatorname{Hom}(\structA,\structB)$ the set of all homomorphisms from $\structA$ to $\structB$. We also write $\structA\to\structB$ if there exists a homomorphism from $\structA$ to $\structB$, and we say that $\structA$ and $\structB$ are \emph{homomorphically equivalent} if $\structA\to\structB$ and $\structB\to\structA$. An \emph{endomorphism of} $\structA$ is a homomorphism from $\structA$ to $\structA$. An \emph{isomorphism} between $\structA$ and $\structB$ is a bijective homomorphism $h$ such that the mapping $h^{-1}\colon B \to A$ that sends $h(x)$ to $x$ is a homomorphism, too. An \emph{automorphism of} $\structA$ is an isomorphism between $\structA$ and itself. A finite structure $\structA$ is called a \emph{core} if every endomorphism of $\structA$ is an automorphism. We say that $\structC$ is a \emph{core of} $\structA$ if $\structC$ is a core and $\structC$ is homomorphically equivalent to $\structA$. It is well known that every finite relational structure has a core which is unique up to isomorphism, thus it makes sense to speak about \emph{the} core of a relational structure.

In this article, we are going to consider polymorphism clones of relational structures in addition to polymorphism clones of a set of relations: we define $\Pol(A;\Gamma)\coloneqq\Pol(\Gamma)$.
For $n\geq 1$, we denote by $\structA^n$ the structure with the same signature $\tau$ as $\structA$ whose domain is $A^n$ such that for every $k$-ary $R\in \tau$, it holds that  $(\boldsymbol{a}_1,\dots,\boldsymbol{a}_k)$ is contained in $R^{\structA^n}$ if and only if it is contained in $R^\structA$ componentwise, i.e., $(a_{1j},\dots,a_{kj})\in R^\structA$ for every $1\leq j\leq n$.
Note that, equivalently, $\Pol(\structA)=\bigcup_{n\in\mathbb{N}}\operatorname{Hom}(\structA^n,\structA)$.

A \emph{primitive positive formula over $\tau$} is a first-order formula which only uses relation symbols in $\tau$, equality, conjunction and existential quantification. If $\structA$ is a $\tau$-structure and $\phi(x_1,\dots,x_n)$ is a $\tau$-formula with free-variables $x_1,\dots,x_n$, then 
$\{(a_1,\dots,a_n)\mid \structA \models \phi(a_1,\dots,a_n)\}$
is called \emph{the relation defined by $\phi$ in $\structA$}. 
In particular, if $\phi$ is primitive positive, then this relation is said to be \emph{pp-definable} in $\structA$. Given two relational structures $\structA$ and $\mathbb B$ on the same domain -- but with possibly different signatures -- we say that $\structA$ \emph{pp-defines} $\structB$ if every relation in $\structB$ is pp-definable in $\structA$.

\begin{Thm}[\cite{BoKaKoRo, Geiger}]\label{thm:InvPol}
Let $\structA$ be a finite relational structure. A relation $R$ has a pp-definition in $\structA$ if and only if $R\in\Inv(\Pol(\structA))$.
\end{Thm}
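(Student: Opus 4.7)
I would split the proof into the two implications. For the easy direction, every pp-definable relation lies in $\Inv(\Pol(\structA))$ by a straightforward induction on pp-formulas: each relation symbol in $\tau$ and the equality relation are in $\Inv(\Pol(\structA))$, and this collection is closed under conjunction (intersection) and existential quantification (projection).

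For the non-trivial direction, I would fix an $n$-ary relation $R \in \Inv(\Pol(\structA))$ and enumerate its tuples as $r^1, \dots, r^m$ with $r^i = (r^i_1, \dots, r^i_n)$. The key move is to pass to the $m$-th power $\structA^m$ and distinguish the \emph{column tuples} $c_j := (r^1_j, r^2_j, \dots, r^m_j) \in A^m$ for $j = 1, \dots, n$. Introducing one variable $x_a$ per element $a \in A^m$, I would consider the primitive positive formula
\begin{equation*}
\phi(x_{c_1}, \dots, x_{c_n}) := \exists (x_a)_{a \in A^m \setminus \{c_1, \dots, c_n\}}\, \bigwedge_{S \in \tau}\bigwedge_{(a_1, \dots, a_k) \in S^{\structA^m}} S(x_{a_1}, \dots, x_{a_k}),
\end{equation*}
and then show that $\phi$ defines $R$ in $\structA$.

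To see that every tuple of $R$ satisfies $\phi$, fix $i \in \{1, \dots, m\}$ and consider the assignment $x_a \mapsto a[i]$, where $a[i]$ denotes the $i$-th coordinate of $a$; this sends $x_{c_j}$ to $r^i_j$, and every atom $S(x_{a_1}, \dots, x_{a_k})$ becomes $S(a_1[i], \dots, a_k[i])$, which holds by the very definition of $S^{\structA^m}$. Conversely, suppose $\structA \models \phi(t_1, \dots, t_n)$; then there is a map $h \colon A^m \to A$ with $h(c_j) = t_j$ such that $(h(a_1), \dots, h(a_k)) \in S^{\structA}$ for every $S \in \tau$ and every $(a_1, \dots, a_k) \in S^{\structA^m}$. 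But this condition says exactly that $h$ is an $m$-ary polymorphism of $\structA$. Applying $h$ coordinatewise to $r^1, \dots, r^m \in R$ yields a tuple whose $j$-th entry is $h(r^1_j, \dots, r^m_j) = h(c_j) = t_j$, so $(t_1, \dots, t_n) \in R$ by invariance of $R$ under $\Pol(\structA)$.

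The main obstacle, and really the only conceptual step, is recognising that the satisfying assignments of $\phi$ correspond precisely to the $m$-ary polymorphisms of $\structA$; once this identification is in place, the invariance hypothesis immediately closes the argument. Finiteness of $A$ (and tacitly of $\tau$, as is customary in the use of pp-formulas) ensures that $\phi$ is a genuine finite pp-formula.
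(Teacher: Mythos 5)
The paper does not prove this statement; it is given with a citation to Bodnar\v{c}uk--Kalu\v{z}nin--Kotov--Romov and Geiger, so there is no in-text argument to compare against. Your proof is the standard argument for the $\Inv$--$\Pol$ Galois correspondence and is correct as written: the forward implication by structural induction on pp-formulas, and the converse via the canonical pp-formula over $\structA^{m}$ with $m=|R|$, whose satisfying assignments are exactly the $m$-ary polymorphisms of $\structA$ and whose distinguished free variables are the columns of $R$; the identification you single out is indeed the crux. Two small points worth recording. First, the construction tacitly assumes $R\neq\emptyset$ (otherwise $m=0$ and there are no column tuples $c_{j}$); the empty relation is a degenerate case that is usually handled by convention or by a separate trivial remark, and is in fact genuinely delicate when $\structA$ has constant polymorphisms. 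Second, if some columns coincide the list $x_{c_{1}},\dots,x_{c_{n}}$ repeats a variable; to obtain a pp-formula with $n$ distinct free variables one should introduce fresh $y_{1},\dots,y_{n}$ together with the equality conjuncts $y_{j}=x_{c_{j}}$. Neither point affects the substance of your argument.
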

 
It is well known that all clones of operations over a fixed set $E_n$ form an algebraic lattice $\mathfrak{L}_n$ under set inclusion. The lattice operations are defined as follows: $\cloC\wedge\cloD\coloneqq \cloC \cap \cloD$ and $\cloC\vee\cloD\coloneqq \langle\cloC \cup \cloD\rangle$. The top-element of $\mathfrak{L}_n$ is the clone ${\mathcal O}_{n}$, its bottom-element is the clone of all projections over $E_n$, which we denote by $\mathcal{P}_n$. A celebrated result, due to Post~\cite{Post}, is the full description of $\mathfrak{L}_2$. Janov and Mu\v{c}nik~\cite{3elem} proved that $\mathfrak{L}_n$ has a continuum of elements, for every $n \geq 3$.

Moreover, Theorem~\ref{thm:InvPol} underlines that pp-definability among relational structures translates to inclusion of the correspondent polymorphism clones.

\begin{Thm}[\cite{BoKaKoRo,Geiger}]\label{cor:DefAndCloneInclusion}
Let $\structA$ and $\structB$ be structures on the same finite set $A$. Then $\structA$ pp-defines $\structB$ if and only if $\Pol(\structA)\subseteq\Pol(\structB)$.
\end{Thm}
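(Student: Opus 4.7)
The plan is to deduce this corollary directly from Theorem~\ref{thm:InvPol}, applied once in each direction, together with the observation that membership in $\Pol(\structB)$ is just the preservation of every relation of $\structB$.

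For the forward direction, I would assume $\structA$ pp-defines $\structB$, so every relation $R$ appearing in $\structB$ admits a primitive positive definition in $\structA$. Theorem~\ref{thm:InvPol} then places every such $R$ inside $\Inv(\Pol(\structA))$, which by definition means that every operation in $\Pol(\structA)$ preserves every relation of $\structB$. Unfolding the definition of $\Pol(\structB)$ gives the inclusion $\Pol(\structA)\subseteq\Pol(\structB)$.

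For the reverse direction, assume $\Pol(\structA)\subseteq\Pol(\structB)$. Then for any relation $R$ of $\structB$, every operation in $\Pol(\structA)$ lies in $\Pol(\structB)$, hence preserves $R$; that is, $R\in\Inv(\Pol(\structA))$. Applying Theorem~\ref{thm:InvPol} again, $R$ is pp-definable in $\structA$. Since this holds for every relation of $\structB$, we conclude that $\structA$ pp-defines $\structB$.

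The argument is essentially bookkeeping: the substantive content is packaged into Theorem~\ref{thm:InvPol}, which characterises $\Inv\circ\Pol$ as pp-definability, and finiteness of $A$ is needed only because that theorem requires it. No genuine obstacle is anticipated; the only point that deserves a moment of care is making explicit that $f\in\Pol(\structB)$ is equivalent to $f$ preserving every relation of $\structB$, so that the inclusion $\Pol(\structA)\subseteq\Pol(\structB)$ translates cleanly into the statement that each relation of $\structB$ belongs to $\Inv(\Pol(\structA))$.
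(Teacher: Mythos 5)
Your proof is correct and follows exactly the route the paper intends: the paper states this as a known consequence of the Galois connection (cited to Bodnar\v{c}uk--Kalu\v{z}nin--Kotov--Romov and Geiger) and gives no proof of its own, noting only that Theorem~\ref{thm:InvPol} ``underlines'' it. Your two-directional bookkeeping argument, applying Theorem~\ref{thm:InvPol} relation by relation, is the standard derivation and fills in exactly what the paper leaves implicit.
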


\subsection{The pp-constructability poset}\label{sec:ourposet}
In this section we briefly introduce the pp-constructability poset -- the main object of study in this article. For this purpose, we first define the notions of pp-constructability and minor homomorphism.

Let $\structA$ and $\structB$ be finite relational structures. We say that $\structB$ is a \emph{pp-power} of $\structA$ if it is isomorphic to a structure with domain $A^n$, for some $n\geq 1$, whose relations are pp-definable from $\structA$. Notice that a $k$-ary relation on $A^n$ is regarded as a $kn$-ary relation on $A$.

\begin{Def}
Let $\structA$ and $\structB$ be finite relational structures. We say that $\structA$ \emph{pp-constructs} $\structB$, and write $\structA\ \leqcon\ \structB$, if $\structB$ is homomorphically equivalent to a pp-power of $\structA$. We also write $\structA\equiv_{\mathrm{Con}}\structB$ if $\structA\ \leqcon\ \structB$ and $\structB\ \leqcon\ \structA$.
\end{Def}
The question that naturally arises now is how pp-constructability among relational structures translates in terms of the correspondent
polymorphism clones: in particular, whether there is a Galois connection that would lead us to a result of the same flavour of Corollary~\ref{cor:DefAndCloneInclusion}.

\begin{Def}\label{def:minor}
Let $f$ be any $n$-ary operation, and let $\sigma$ be a map from $E_n$ to $E_r$. We denote by $f_\sigma$ the following $r$-ary operation 
\begin{equation*}
f_\sigma(x_0,\dots,x_{r-1}) \coloneqq f(x_{\sigma(0)},\dots,x_{\sigma(n-1)}).
\end{equation*}
Any operation of the form $f_\sigma$, for some $\sigma \colon E_m\to E_n$, is called a \emph{minor} of $f$.
\end{Def}
A \emph{minor identity} is a formal expression of the form 
\[\forall x_1,\dots,x_r (f(x_{\sigma(0)},\dots,x_{\sigma(n-1)}) = g(x_{\pi(0)},\dots,x_{\pi(m-1)})),\] 
where $f$ and $g$ are function symbols and $\sigma\colon E_n \rightarrow E_r$ and $\pi\colon E_m \rightarrow E_r$ are some maps; in this case we write $f_\sigma \approx g_\pi$. A \emph{minor condition} is a finite set of minor identities.
\begin{Def}\label{def:minorpreservingmaps}
Let $\cloA$ and $\cloB$ be clones and let $\xi \colon \cloA \rightarrow \cloB$ be a mapping that preserves arities. We say that $\xi$ is a \emph{minor homomorphism} if
\begin{equation*}
\xi(f_\sigma) = \xi(f)_\sigma
\end{equation*}
for any $n$-ary operation $f \in \cloA$ and $\sigma\colon E_n \rightarrow E_r$.
\end{Def}

We say that a set of operations $F$ \emph{satisfies} a minor condition $\Sigma$, and write $F \models\Sigma$, if every function symbol in $\Sigma$ can be mapped to an operation in $F$ such that, for every $f_{\sigma}\approx g_{\pi}$ in $\Sigma$, the equality $f_{\sigma}^F = g_{\pi}^F$ holds for every evaluation of the variables. Moreover, we say that an operation $f$ satisfies a minor condition $\Sigma$ if $\{f\}\models\Sigma$.

Next we are going to define some examples of minor conditions, which we will use in Section~\ref{sec:subOn3}.

\begin{Def}\label{def:symmMinorConditions}
We define the following minor conditions:
\begin{itemize}

\item We call \emph{cyclic identity of arity} $p$, for some $p\geq 2$, the following identity
\begin{equation}\tag{$\Sigma_p$}
    c(x_1,x_2,\dots,x_p)\approx c(x_2,\dots,x_p,x_1).
\end{equation}

\item We call \emph{quasi minority} the following minor condition:
\begin{equation*}m(x,y,y)\approx m(y,x,y)\approx m(y,y,x)\approx m(x,x,x).\end{equation*}

\item We call \emph{quasi Mal'cev} the following minor condition:
\begin{equation}\tag{$\Sigma_{\operatorname{M}}$}m(x,y,y)\approx m(y,y,x)\approx m(x,x,x).\end{equation}

\item We call \emph{quasi majority} the following minor condition:
\begin{equation*}m(x,y,y)\approx m(y,x,y)\approx m(y,y,x)\approx m(y,y,y).\end{equation*}

\item We call $n$\emph{-ary symmetric condition} the minor condition that consists of all identities of the form
\begin{equation*}\tag{$\operatorname{FS}(n)$}
    f(x_1, x_2,\dots, x_n) \approx f(x_{\pi(1)}, x_{\pi(2)},\dots,x_{\pi(n)}),
\end{equation*}
where $\pi$ is a permutation of the set $\{1,2,\dots,n\}$.

\item  We call $n$\emph{-ary totally symmetric condition} the minor condition that consists of all identities of the form 
\begin{equation*}\tag{$\operatorname{TS}(n)$}
   f(x_{i_1},x_{i_2},\dots,x_{i_n}) \approx f(x_{j_1},x_{j_2},\dots,x_{j_n}), 
\end{equation*}
whenever $\{i_1, i_2,\dots,i_n\} = \{j_1,j_2,\dots,j_n\}$.

\item  We call $n$\emph{-ary generalized minority condition},
where $n\ge 3$ is odd,
the minor condition that consists of all the identities from  $\operatorname{FS}(n)$
and 
\begin{equation*}\tag{$\operatorname{GM}(n)$}
   f(x,x,x_3,x_4,\dots,x_n) \approx 
   f(y,y,x_3,x_4,\dots,x_n).
\end{equation*}

\item We call \emph{weak near-unanimity condition of arity} $n\geq 3$ the following minor condition 
\begin{equation}\tag{$\operatorname{WNU}(n)$}
    w(x,\dots,x,y)\approx w(x,\dots,x,y,x) \approx\ldots\approx w(y,x,\dots,x).
\end{equation}

\item We call \emph{quasi near-unanimity condition of arity} $n\geq 3$ the following minor condition
\begin{align}
   w(x,\dots,x,y)&\approx w(x,\dots,x,y,x)\approx\notag
   \\ &\approx\ldots\approx w(y,x,\dots,x)\approx w(x,\dots,x).\tag{$\operatorname{QNU}(n)$}
\end{align}
\end{itemize}
\end{Def}

A $k$-ary operation $f$ is a \emph{quasi Mal'cev operation} if it satisfies the minor condition $\Malcev$; we adopt an analogous convention for all other conditions introduced so far. A \emph{Mal'cev operation} is an idempotent quasi Mal'cev operation; we adopt a similar convention in defining a \emph{minority operation}, a \emph{generalized minority operation}, a \emph{majority operation}, and a \emph{near-unanimity operation}. If $f$ is a totally symmetric operation of arity $n$ we also write $f(\{x_1, x_2,\dots,x_n\})$ instead of $f(x_1, x_2,\dots,x_n)$. Note that every totally symmetric operation is symmetric. The other implication does not hold: the majority operation over $\{0,1\}$ is symmetric but not totally symmetric. The multiplicity of variables plays a role in the definition of a symmetric operation: as a matter of fact, $f(x,x,y)$ need not be equal to $f(x,y,y)$.

In Remark~\ref{Ex:cyclesAndCyclic} we show an example of a minor condition that is not satisfied by the polymorphisms of a certain structure: the polymorphism clone of a directed cycle of length $p$ does not satisfy the cyclic identity of arity $p$. This is an easy observation that will nevertheless come in handy later in this article, more precisely, in the proof of Theorem~\ref{thm:splittingCycles}.

\begin{Rem}\label{Ex:cyclesAndCyclic}
Consider the relational structure $\structC_p = (E_p;R^{\structC_p})$, where
\[R^{\structC_p}\coloneqq \{(0,1),\dots,(p-2,p-1),(p-1,0)\}.\]

It is easy to see that, for every prime $p$, it holds that $\Pol(\structC_p)$ does not satisfy the cyclic identity of arity $p$, i.e., $\Pol(\structC_p)\not\models\Sigma_p$.
Indeed, suppose that there exists a polymorphism $f$ of $\structC_p$ satisfying $\Sigma_p$, then
\[f(0,\dots,p-1) = f(1,\dots,p-1,0) = a, \text{ for some } a\in E_p.\]
It would follow that $(a,a)\in R^{\structC_p}$, a contradiction.
\end{Rem}
The connection between pp-constructability and minor homomorphisms is given by the following theorem. 

\begin{Thm}[\cite{wonderland}]\label{thm:barto}
Let $\structA$, $\structB$ be finite relational structures and $\cloA=\Pol(\structA)$, $\cloB=\Pol(\structB)$. The following are equivalent:
\begin{enumerate}
    \item There exists a minor homomorphism from $\cloA$ to $\cloB$;
    \item $\structA$ pp-constructs $\structB$;
    \item if $\cloA$ satisfies a minor condition $\Sigma$, then $\cloB\models\Sigma$.
\end{enumerate}
\end{Thm}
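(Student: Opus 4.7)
I would prove this by establishing the cycle $(1) \Rightarrow (3) \Rightarrow (2) \Rightarrow (1)$, since each direction uses a distinct tool.

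The implication $(1) \Rightarrow (3)$ is the easiest, and essentially unwinds the definitions. Suppose $\xi\colon\cloA\to\cloB$ is a minor homomorphism and that $\cloA\models\Sigma$, witnessed by an assignment of each function symbol $f$ in $\Sigma$ to some $f^{\cloA}\in\cloA$ of matching arity. Define $f^{\cloB}\coloneqq\xi(f^{\cloA})$; since $\xi$ preserves arities this is arity-correct, and for each identity $f_\sigma \approx g_\pi$ of $\Sigma$ the equality $(f^{\cloA})_\sigma = (g^{\cloA})_\pi$ transports through $\xi$ to $\xi(f^{\cloA})_\sigma = \xi(g^{\cloA})_\pi$ because $\xi$ commutes with minor formation. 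Thus $\cloB\models\Sigma$.

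For $(2) \Rightarrow (1)$, let $\structB'$ be a pp-power of $\structA$ with domain $A^n$ that is homomorphically equivalent to $\structB$, and fix homomorphisms $h\colon \structB'\to\structB$, $g\colon\structB\to\structB'$. Any $f\in\cloA$ of arity $r$ acts componentwise on $(A^n)^r$, producing an operation $\tilde f$ on $A^n$; since the relations of $\structB'$ are pp-definable in $\structA$ and polymorphisms of $\structA$ preserve each pp-definable relation, $\tilde f\in\Pol(\structB')$. Now define $\xi(f)(x_1,\dots,x_r) \coloneqq h\bigl(\tilde f(g(x_1),\dots,g(x_r))\bigr)$; this is a polymorphism of $\structB$, that is, an element of $\cloB$, and a direct computation shows that $\xi$ preserves arities and commutes with the minor operation $f\mapsto f_\sigma$, giving a minor homomorphism $\cloA\to\cloB$.

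The main obstacle is $(3) \Rightarrow (2)$, which requires exhibiting a pp-power of $\structA$ mapping homomorphically to $\structB$ out of purely syntactic information. The plan is to encode $\structB$ itself as a minor condition $\Sigma_{\structB}$ whose natural witness in $\Pol(\structB)$ is the family of coordinate projections on $B$. Concretely, for $B=\{b_0,\dots,b_{k-1}\}$, introduce for every relation $R\subseteq B^m$ of $\structB$ a function symbol $f_R$ of arity $|R|$, and write down the minor identities obtained by asserting that for each column $j$, the $j$-th coordinate of the tuple listing the elements of $R$ is determined by an appropriate identification of variables with the generators $b_0,\dots,b_{k-1}$. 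By construction $\cloB\models\Sigma_{\structB}$ with the coordinate projections as witnesses, so hypothesis (3) yields operations $(f_R^{\cloA})_{R}$ in $\cloA$ satisfying $\Sigma_{\structB}$. The pp-power of $\structA$ whose domain is $A^{|R_0|}$ (for a sufficiently rich $R_0$) and whose relations are the pp-formulas determined by these witnesses can then be shown to map homomorphically onto $\structB$, with the homomorphism sending a tuple $(a_0,\dots,a_{k-1})$ to the element $b_i$ such that this tuple plays the role of the projection to $b_i$ in the witness. The delicate part will be checking that the encoding of the relations as minor identities is both satisfiable by $\cloB$ in the intended way and strong enough that any witness in $\cloA$ actually produces a valid pp-construction; this is where one leans on Theorem~\ref{thm:InvPol} to translate the existence of polymorphism witnesses back into pp-definability of the required relations on the power $A^k$.
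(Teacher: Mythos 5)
The paper does not prove this theorem; it cites it from \cite{wonderland} and treats it as a black box, so there is no internal proof to compare against. Your sketch is nevertheless a reasonable reconstruction. The implications $(1)\Rightarrow(3)$ and $(2)\Rightarrow(1)$ are correct as you describe them: the first is a direct unwinding of Definition~\ref{def:minorpreservingmaps}, and in the second, the componentwise action $\tilde f$ of $f\in\Pol(\structA)$ on $A^n$ preserves every relation of the pp-power $\structB'$ because those relations are pp-definable over $\structA$, so $\xi(f)\coloneqq h\circ\tilde f\circ(g,\dots,g)$ lands in $\Pol(\structB)$ and commutes with taking minors.

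The step $(3)\Rightarrow(2)$ is where the sketch drifts. The idea of encoding $\structB$ as a minor condition $\Sigma_\structB$ whose canonical witnesses in $\Pol(\structB)$ are projections, then converting the witnesses in $\cloA$ supplied by (3) into a pp-power of $\structA$, is correct in spirit; it is precisely the \emph{free structure} construction. But the domain of the pp-power should be $A^{|A|^{|B|}}$, whose elements encode $|B|$-ary operations $A^{|B|}\to A$, not $A^{|R_0|}$ for some relation $R_0$ of $\structB$ (that exponent has no natural interpretation in this encoding). For a relation $R^\structB$ of arity $m$ with tuples $\boldsymbol{r}_1,\dots,\boldsymbol{r}_t$, the simulated relation on the pp-power consists of the $m$-tuples $(f_1,\dots,f_m)$ for which there exists a $t$-ary $w\in\Pol(\structA)$ with $f_i=w_{\pi_i}$, where $\pi_i(j)$ is the index in $B$ of $\boldsymbol{r}_j(i)$; the homomorphism $\structB\to$ (free structure) sends $b_i$ to the $i$-th projection, and the homomorphism back is built from the operations supplied by (3). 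This is exactly what the paper carries out concretely for $\structB_2$ and $\structC_p$ in Theorems~\ref{thm:splittingMalcev} and~\ref{thm:splittingCycles} (there the domains are $A^{|A|^2}$ and $A^{|A|^p}$ respectively, matching $|B|=2$ and $|B|=p$); modeling your general argument on those two proofs would repair the sketch. One further caveat: a minor condition in this paper is by definition a \emph{finite} set of identities, so $\Sigma_\structB$ is a single minor condition only when $\structB$ has finite signature; otherwise one should argue reduct by reduct and invoke a compactness argument.
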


We would like to remark that, unfortunately, the latter theorem does not yield a Galois connection for clones: Barto, Opr\v{s}al, and Pinsker also provide a semantic characterization of minor homomorphisms, by introducing a new operator called reflection~\cite{wonderland}. However, a reflection of a clone need not contain projections or be closed under composition; thus, the reflection of a clone is, in general, not a clone. What we obtain by taking a reflection of a clone is known in the literature as minion; they play a crucial role in Promise Constraint Satisfaction Problems~\cite{BBKO}.

\begin{Cor}[\cite{wonderland}]\label{cor:coresAndIdentities}
Let $\structA$ be a finite relational structure, let $\structC$ be the core of $\structA$ and let $\structC^c$ be the expansion of $\structC$ by all unary relations $\{a\}_{a\in C}$. Then:
\begin{enumerate}
    \item $\structA\equiv_{\mathrm{Con}}\structC \equiv_{\mathrm{Con}}\structC^c$;
    \item for every minor condition $\Sigma$, $\Pol(\structA)\models\Sigma$ if and only if $\Pol(\structC^c)\models\Sigma$.
\end{enumerate}
\end{Cor}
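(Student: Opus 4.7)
The plan is to prove (1) first and derive (2) from it as a direct consequence of Theorem~\ref{thm:barto}. For (1), the equivalence $\structA\equiv_{\mathrm{Con}}\structC$ is essentially immediate: by the definition of core, $\structA$ and $\structC$ are homomorphically equivalent, and each is a pp-power of itself of dimension~$1$, hence each pp-constructs the other. The direction $\structC^c\leqcon\structC$ is likewise easy: since $\structC$ is a reduct of $\structC^c$, every relation of $\structC$ is literally present in (and in particular pp-definable from) $\structC^c$, so $\structC$ itself, viewed as the pp-power of $\structC^c$ of dimension~$1$ whose relations are exactly those of $\structC$, witnesses the required homomorphic equivalence.

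The main obstacle is the reverse direction $\structC\leqcon\structC^c$, that is, constructing a pp-power of $\structC$ homomorphically equivalent to $\structC^c$. The key ingredient is the classical ``orbit lemma'' for cores: in a finite core $\structC$, every orbit of $\mathrm{Aut}(\structC)$ acting coordinatewise on $C^m$ is invariant under all polymorphisms of $\structC$, and hence, by Theorem~\ref{thm:InvPol}, pp-definable in $\structC$. Setting $n=|C|$, fixing an enumeration $(c_1,\dots,c_n)$ of $C$, and letting $O\subseteq C^n$ be the orbit of $(c_1,\dots,c_n)$ under $\mathrm{Aut}(\structC)$, I would build a pp-power $\struct{P}$ of $\structC$ of dimension $n$ whose relations consist of (a) coordinatewise lifts of $\structC$'s relations, restricted to tuples from $O$; and (b) for each $a\in C$, a pp-definable unary relation $U_a\subseteq O$ encoding ``this tuple represents $a$'', obtained by combining pp-definitions of orbits of suitably chosen extended tuples from $C^{n+1}$. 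A homomorphism $\structC^c\to\struct{P}$ is then produced by sending each $a$ to a canonical code in $O$, while a homomorphism $\struct{P}\to\structC^c$ is given by a coordinate projection composed with the decoding. The delicate step, where the orbit lemma does the real work, is ensuring that the $U_a$ are simultaneously pp-definable in $\structC$ and interact correctly with the projection so that the code of $a$ lands in $a$.

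Once (1) is in hand, part (2) follows almost mechanically. Theorem~\ref{thm:barto} states that $\struct{X}\leqcon\struct{Y}$ is equivalent to ``every minor condition satisfied by $\Pol(\struct{X})$ is also satisfied by $\Pol(\struct{Y})$''. Applying this in both directions to the pair $(\structA,\structC^c)$ and combining with $\structA\equiv_{\mathrm{Con}}\structC^c$ obtained in (1) yields that $\Pol(\structA)$ and $\Pol(\structC^c)$ satisfy exactly the same minor conditions, which is the content of (2).
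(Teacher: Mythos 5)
Your reduction of part (2) to part (1) via Theorem~\ref{thm:barto}, and your treatment of the equivalences $\structA\equiv_{\mathrm{Con}}\structC$ and $\structC^c\ \leqcon\ \structC$, are all correct; the paper itself cites this corollary from [wonderland] without giving its own proof, so the only thing to assess is whether your construction for the remaining direction $\structC\ \leqcon\ \structC^c$ works. It does not, as stated. You want pp-definable unary relations $U_a\subseteq O$ in a pp-power of dimension $n=|C|$, where $O$ is the $\mathrm{Aut}(\structC)$-orbit of $(c_1,\dots,c_n)$, with $U_a$ picking out codes of $a$. But every automorphism of $\structC$ is a polymorphism of $\structC$, so by Theorem~\ref{thm:InvPol} any relation pp-definable in $\structC$ is invariant under $\mathrm{Aut}(\structC)$; since $O$ is a single orbit, the only $\mathrm{Aut}(\structC)$-invariant subsets of $O$ are $\varnothing$ and $O$. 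Hence the $U_a$ cannot distinguish elements of $C$ lying in a common automorphism orbit, and your plan collapses for any $\structC$ whose automorphism group is nontrivial. For the same reason, the decoding $\struct{P}\to\structC^c$ cannot be ``a coordinate projection composed with the decoding'': if the decoded value depended only on a fixed coordinate $d_i$, then $U_a\subseteq\{\,\mathbf{d}:d_i=a\,\}$, which is not $\mathrm{Aut}(\structC)$-invariant once some automorphism moves $a$.

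The repair is to make each code carry a separate ``value'' coordinate in addition to a ``frame''. For instance, take dimension $n+1$ and set $h(a)\coloneqq(a,c_1,\dots,c_n)$. Let $U_a$ be the $\mathrm{Aut}(\structC)$-orbit of $h(a)$; these orbits are pp-definable by your orbit lemma together with Theorem~\ref{thm:InvPol}, and pairwise disjoint because the tuple $(c_1,\dots,c_n)$ has trivial stabilizer in $\mathrm{Aut}(\structC)$. For each $k$-ary relation $R$ of $\structC$, define $R^{\struct{P}}$ to consist of those $k$-tuples of codes whose last $n$ coordinates all agree and lie in $O$, and whose first coordinates form a tuple of $R$; this is pp-definable as a conjunction of equalities, the orbit relation $O$, and $R$, and it (unlike a mere coordinatewise lift) forces all $k$ codes to share a common frame. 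The decoding $g$ recovers from the last $n$ coordinates the unique automorphism $\alpha$ with $\alpha(c_i)=d_{i}$ and returns $\alpha^{-1}(d_0)$; note that it genuinely uses all $n+1$ coordinates and is not a projection followed by a fixed map. On tuples whose last $n$ coordinates are not in $O$, which participate in no relation of $\struct{P}$, one can set $g$ arbitrarily. With these corrections your overall plan goes through, and part (2) then follows from part (1) exactly as you describe.
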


We write $\cloA\ \leqm\ \cloB$ if there exists a minor homomorphism $\xi\colon \cloA \rightarrow \cloB$, and we denote by $\eqm$ the equivalence relation where
$\cloA\ \eqm\ \cloB$ if $\cloA \ \leqm\ \cloB$ and $\cloB \ \leqm\ \cloA$. Note that $\cloA \ \subseteq\ \cloB$ implies $\cloA \ \leqm\ \cloB$. Moreover, we denote by $\overline{\cloA}$ the $\eqm$-class of $\cloA$, that is
\[\overline{\cloA}\coloneqq\{\cloC\mid \cloC \text{ is  a clone over some finite set and } \cloC\ \eqm\ \cloA\}\]
and we write $\overline{\cloA}\ \leqm\ \overline{\cloB}$ if and only if $\cloA\ \leqm\ \cloB$.

We finally define the following posets:
\begin{align*}
\Pfin &\coloneqq(\{\overline{\mathcal{C}} \mid \mathcal C\text{ is a clone over a finite set}\};\leqm);
\\\mathfrak{P}_n &\coloneqq(\{\overline{\mathcal{C}} \mid \mathcal C\text{ is a clone over } E_n\};\leqm)
\end{align*}
and call them the \emph{pp-constructability poset} and the \emph{pp-constructability poset restricted to clones over an $n$-element set}, respectively.
Note that a full description of $\mathfrak{P}_2$ was presented in \cite{albert}, and this article entirely focuses on $\mathfrak{P}_3$. 

A \emph{constant operation} of arity $n$ is an operation $c^{(n)}$ defined as follows
\[c^{(n)}(x_1,\dots,x_n)\coloneqq c\]
where $c\in E_m$, for some $n,m\geq 1$; if $n=1$, we simply write $c$ to denote the unary constant operation $c^{(1)}$. We want to remark that if $\cloC$ has a constant operation $c^{(n)}$, for some $n$, then it has a constant operation for every arity. It is easy to show that $\overline{\langle 0\rangle}$ is the top-element of $\Pthree$. Indeed, for every clone over $\{0,1,2\}$ the map that sends every $n$-ary operation to $0^{(n)}$ is minor-preserving.

\subsection{The unique coatom}\label{sec:coatom}
In this section we prove that both the posets $\Pfin$ and $\Pthree$ have exactly one coatom, -- i.e., a unique maximal element -- namely $\overline{\cloI_2}$, that is, the $\eqm$-class of the clone of all idempotent operations over $\{0,1\}$.

Let us define the following relational structures
\begin{align*}
    \structC_1 &\coloneqq (\{0\};\{(0,0)\});
    \\\mathbb{I}_n &\coloneqq(E_n;\{0\},\dots,\{n-1\}), \text{ for every } n\geq 2.
\end{align*}

\begin{Prop}\label{prop:idempotent}
For every finite relational structure $\structA$ exactly one of the following holds: either $\structC_1\ \leqcon\ \structA$ or $\structA\ \leqcon\ \mathbb{I}_2$.
\end{Prop}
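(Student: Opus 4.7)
The plan is to pass to the core of $\structA$ and split the analysis according to its cardinality. Let $\structC$ denote the core of $\structA$ and $\structC^c$ its expansion by all singleton unary relations; by Corollary~\ref{cor:coresAndIdentities}\,(1) we have $\structA\equiv_{\mathrm{Con}}\structC^c$, so it suffices to locate $\structC^c$ between $\structC_1$ and $\mathbb{I}_2$. Exclusivity of the two alternatives is dispatched separately: by Theorem~\ref{thm:barto} it is enough to exhibit a minor identity satisfied by $\Pol(\structC_1)$ but not by $\cloI_2=\Pol(\mathbb{I}_2)$, and the unary identity $f(x_0)\approx f(x_1)$ (formally $f_\sigma\approx f_\pi$ with $\sigma,\pi\colon E_1\to E_2$ the two inclusions) does the job, since the only unary idempotent operation on $\{0,1\}$ is the identity; transitivity of $\leqcon$ then forbids both alternatives to hold simultaneously.

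Suppose first that $|C|=1$, with $C=\{c\}$. The homomorphisms $h\colon\structA\to\structC$ and $g\colon\structC\to\structA$ witnessing homomorphic equivalence compose to the constant endomorphism $x\mapsto g(c)$ of $\structA$, so the $n$-ary constant operation at $g(c)$ lies in $\Pol(\structA)$ for every $n\geq 1$. The map sending the unique $n$-ary operation of $\Pol(\structC_1)$ to this constant operation is immediately a minor homomorphism (every minor of a constant of arity $n$ is the constant of the new arity), so $\Pol(\structC_1)\leqm\Pol(\structA)$ and hence $\structC_1\leqcon\structA$ by Theorem~\ref{thm:barto}. Suppose instead that $|C|\geq 2$ and fix distinct $a,b\in C$. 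The signature of $\structC^c$ contains the unary relations $\{a\}$ and $\{b\}$, so the structure $\structB\coloneqq(C;\{a\},\{b\})$ is already a pp-power of $\structC^c$ of dimension~$1$. I claim $\structB$ is homomorphically equivalent to $\mathbb{I}_2$: the assignment $0\mapsto a$, $1\mapsto b$ defines a homomorphism $\mathbb{I}_2\to\structB$, and any extension of $a\mapsto 0$, $b\mapsto 1$ to all of $C$ defines a homomorphism $\structB\to\mathbb{I}_2$; both trivially preserve the two singleton relations. Hence $\structC^c\leqcon\mathbb{I}_2$, and combining with $\structA\equiv_{\mathrm{Con}}\structC^c$ yields $\structA\leqcon\mathbb{I}_2$.

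The only delicate step I foresee is the exclusivity argument, where one must pin down an explicit minor identity separating $\Pol(\structC_1)$ from $\cloI_2$; once that is in hand, the remainder is a straightforward application of the core reduction together with the observation that any two distinct constants in $\structC^c$ already pp-define a copy of $\mathbb{I}_2$.
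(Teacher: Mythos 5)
Your proposal is correct and follows essentially the same route as the paper: pass to the core expanded by all singleton unary relations (Corollary~\ref{cor:coresAndIdentities}), split on whether the core has one element or more, and in the latter case observe that the two singletons $\{a\},\{b\}$ already form a one-dimensional pp-power homomorphically equivalent to $\mathbb{I}_2$ via exactly the same maps the paper uses. The two small deviations are stylistic rather than substantive: in the one-element case you go through polymorphisms and Theorem~\ref{thm:barto} (sending the unique operation of $\Pol(\structC_1)$ to a constant polymorphism of $\structA$) where the paper simply asserts $\structC_1\leqcon\structB$ directly, and you fold the exclusivity argument, via the identity $f(x)\approx f(y)$, into the body of the proof, whereas the paper proves only one-sided coverage inside the proposition and establishes exclusivity in the paragraph immediately following it, using the very same separating minor identity. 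Both choices are fine and neither changes the substance of the argument.
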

\begin{proof}
Let $\structA$ be a relational structure and let $\structB$ be its core expanded by all unary relations.
By Corollary~\ref{cor:coresAndIdentities} it holds that $\structC_1$ pp-constructs $\structA$ if and only if $\structC_1$ pp-constructs $\structB$ and $\structA$ pp-constructs $\struct{I}_2$ if and only if $\structB$ pp-constructs $\struct{I}_2$. Thus, we are going to prove the claim for $\structB$. Let $B=\{b_0,\dots,b_{n-1}\}$ be the domain of $\structB$.
If $n=1$, then it is straightforward to see that $\structC_1\ \leqcon\ \structB$. Let us assume that $n>1$, we need to show that $\structB\ \leqcon\ \mathbb{I}_2$. Consider the pp-power $\structS\coloneqq(\{b_0,\dots,b_{n-1}\};O,I)$ of $\structB$, where $O$ and $I$ are the unary relations defined by the formulae $O(x)\coloneqq (x=b_0)$ and $I(x)\coloneqq (x=b_1)$, respectively. Let us define the maps $g\colon \structS\to\struct{I}_2$ that maps $b_0$ to 0 and every other element to 1 and $h\colon\struct{I}_2\to\structS$ that maps 0 to $b_0$ and 1 to $b_1$. It is straightforward to check that $g$ and $h$ are homomorphisms. Thus $\struct{I}_2$ and $\structS$ are homomorphically equivalent and $\structB$ pp-constructs $\struct{I}_2$.
\end{proof}

Note that, for every $n\geq 2$, $\Pol(\structC_1)\ \nleqm\ \Pol(\struct{I}_n)$, since $\Pol(\structC_1)$ satisfies the minor identity $f(x)\approx f(y)$, while $\Pol(\struct{I}_n)$ does not. Therefore, for every $n\geq 2$, the structure $\structC_1$ does not pp-construct $\struct{I}_n$.

\begin{Prop}\label{prop:collapseIdemp}
For every $n \geq 2$ it holds that $\mathbb{I}_2 \equiv_{\mathrm{Con}} \mathbb{I}_n$.
\begin{proof}
First, we show that $\mathbb{I}_2\ \leqcon\ \mathbb{I}_n$. Consider the structure $\structS\coloneqq (\{0,1\}^n;\Phi_0,\dots,\Phi_{n-1})$ where each $\Phi_i$ is a relation defined as follows:
\begin{align*}
    \Phi_i\coloneqq \{(x_0,\dots,x_{n-1})\mid (x_i = 1) \wedge \bigwedge_{j \in E_n\setminus\{i\}} (x_j = 0)\}.
\end{align*}
Let us denote by $\boldsymbol{e}_i\in\{0,1\}^n$ the tuple that has a 1 in the $i$-th coordinate and 0s elsewhere. The maps \begin{align*}
    g\colon i\mapsto \boldsymbol{e}_i && h\colon \boldsymbol{x}\mapsto\begin{cases}
    i & \mbox{if } \boldsymbol{x} = \boldsymbol{e}_i,
    \\ 0 & \mbox{otherwise}.
    \end{cases}
\end{align*}
are respectively homomorphisms from $\mathbb{I}_n$ to $\structS$ and from $\structS$ to $\mathbb{I}_n$. This proves that $\mathbb{I}_2\ \leqcon\ \mathbb{I}_n$. The other inclusion follows from Proposition~\ref{prop:idempotent} and the fact that $\structC_1$ does not pp-construct $\struct{I}_n$, which is observed in the paragraph preceding
Proposition~\ref{prop:collapseIdemp}.
\end{proof}
\end{Prop}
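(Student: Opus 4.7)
The plan is to establish both $\mathbb{I}_2 \leqcon \mathbb{I}_n$ and $\mathbb{I}_n \leqcon \mathbb{I}_2$ separately, with the former being the constructive step and the latter following almost immediately from Proposition~\ref{prop:idempotent}.

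For $\mathbb{I}_2 \leqcon \mathbb{I}_n$, I would exhibit a pp-power $\structS$ of $\mathbb{I}_2$ of dimension $n$ with domain $\{0,1\}^n$ in the signature of $\mathbb{I}_n$, namely $n$ unary relation symbols, and interpret the $i$-th one as the singleton $\{\boldsymbol{e}_i\}$, where $\boldsymbol{e}_i$ is the standard basis vector with a $1$ in position $i$ and $0$ elsewhere. Each such singleton is pp-definable in $\mathbb{I}_2$ by the conjunction $(x_i = 1) \wedge \bigwedge_{j \ne i}(x_j = 0)$, using the available unary relations $\{0\}$ and $\{1\}$, so $\structS$ is genuinely a pp-power. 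Homomorphic equivalence with $\mathbb{I}_n$ is witnessed by the two obvious maps: $g : i \mapsto \boldsymbol{e}_i$ from $\mathbb{I}_n$ to $\structS$, and any $h : \structS \to \mathbb{I}_n$ sending $\boldsymbol{e}_i \mapsto i$ with an arbitrary default value (say $0$) elsewhere. Checking that both respect the $n$ singleton unary relations is routine since on each side those relations contain exactly one point.

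For the reverse direction, I would apply the dichotomy of Proposition~\ref{prop:idempotent} to $\structA := \mathbb{I}_n$: either $\structC_1 \leqcon \mathbb{I}_n$ or $\mathbb{I}_n \leqcon \mathbb{I}_2$. To rule out the first alternative, I invoke the observation made immediately before the statement, which follows from Theorem~\ref{thm:barto}: $\Pol(\structC_1)$ satisfies the minor identity $f(x) \approx f(y)$ (since its domain has a single element), while $\Pol(\mathbb{I}_n)$ does not, because any polymorphism must preserve each of the singleton unary relations $\{0\}, \dots, \{n-1\}$ and so cannot identify them. Hence $\structC_1 \nleqcon \mathbb{I}_n$, and the dichotomy forces $\mathbb{I}_n \leqcon \mathbb{I}_2$.

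There is no real obstacle here; the only design decision is the shape of the pp-power, and the ``one-hot encoding'' of $E_n$ inside $\{0,1\}^n$ is essentially forced once one chooses to mirror the signature of $\mathbb{I}_n$ by $n$ unary pp-definable relations of $\mathbb{I}_2$.
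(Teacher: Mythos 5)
Your proposal is correct and follows essentially the same route as the paper: the same one-hot pp-power of $\mathbb{I}_2$ with singleton relations $\{\boldsymbol{e}_i\}$, the same pair of homomorphisms $g$ and $h$, and the same appeal to Proposition~\ref{prop:idempotent} together with the observation that $\Pol(\structC_1)$ satisfies $f(x)\approx f(y)$ while $\Pol(\mathbb{I}_n)$ does not. There is nothing to flag.
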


Let us denote $\Pol(\struct{I}_n)$ by $\cloI_n$, for every $n\geq 2$. It follows from Proposition~\ref{prop:collapseIdemp} and Theorem~\ref{thm:barto} that $\cloI_2\ \eqm\ \cloI_3$. Thus, $\overline{\cloI_2}$ is the unique coatom in $\Pthree$. In Section~\ref{sec:subOn3} we classify all elements covered by $\overline{\cloI_2}$ in $\Pthree$.

\subsection{Auxiliary theorems}
Here, we present two results that are going to serve as auxiliary statements in Section~\ref{sec:subOn3}. We would like to point out that both of the statements we present in this section are known in the literature. However, in its original form, Theorem~\ref{thm:splittingMalcev} is not formulated in terms of pp-constructability. Similarly, although it is not found with this formulation anywhere in the literature, Theorem~\ref{thm:splittingCycles} can be obtained with a little work from Lemma~6.8 in \cite{cyclesBodirskyStrakeVucaj}. We therefore prefer to include two relatively simple and self-contained proofs with the aim of helping the reader. 
We define the structure: $\structB_2\coloneqq (\{0,1\};\{0\},\{1\},\{(0,1),(1,0),(1,1)\})$. 

\begin{Thm}[\cite{oprsal18}, Proposition 7.7]\label{thm:splittingMalcev}
Let $\structA$ be a finite structure. Then $\structA$ pp-constructs $\structB_2$ if and only if $\Pol(\structA)$ does not satisfy $\Sigma_{\operatorname{M}}$.
\end{Thm}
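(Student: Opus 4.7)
Assume $\structA\leqcon\structB_2$. By Theorem~\ref{thm:barto} there exists a minor homomorphism $\xi\colon\Pol(\structA)\to\Pol(\structB_2)$, and any $m\in\Pol(\structA)$ satisfying $\Malcev$ would be mapped to a witness $\xi(m)\in\Pol(\structB_2)$ of $\Malcev$. Since $\structB_2$ contains the singleton relations $\{0\}$ and $\{1\}$, $\Pol(\structB_2)$ is idempotent, so $\xi(m)$ would be a genuine Mal'cev operation on $\{0,1\}$. Applied coordinate-wise to $(1,0),(1,1),(0,1)$, all of which lie in $R=\{(0,1),(1,0),(1,1)\}$, it would produce $(\xi(m)(1,1,0),\xi(m)(0,1,1))=(0,0)\notin R$ --- a contradiction. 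Hence $\Pol(\structA)\not\models\Malcev$.

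\textbf{Backward direction: setup.} For the converse I argue by contrapositive. By Corollary~\ref{cor:coresAndIdentities}, both sides are preserved when $\structA$ is replaced with its core expanded by all constants, so I may assume $\Pol(\structA)$ is idempotent; then $\Malcev$ reduces to the ordinary Mal'cev identities, and the hypothesis says $\Pol(\structA)$ has no Mal'cev polymorphism. I would encode this failure as a pp-definable relation on a power of $\structA$. Enumerating the pairs $(a_i,b_i)\in A^2$ with $a_i\neq b_i$ for $i=1,\dots,n$, consider
\[
\bar G_1=\big((a_i,b_i)\big)_{i=1}^n,\quad \bar G_2=\big((b_i,b_i)\big)_{i=1}^n,\quad \bar G_3=\big((b_i,a_i)\big)_{i=1}^n,
\]
and the target tuple $\bar T=((a_i,a_i))_{i=1}^n$, all in $A^{2n}$. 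Let $S$ be the smallest $\Pol(\structA)$-invariant subset of $A^{2n}$ containing $\bar G_1,\bar G_2,\bar G_3$; by Theorem~\ref{thm:InvPol}, $S$ is pp-definable in $\structA$. A straightforward term-manipulation shows that $\bar T\in S$ if and only if there is a single ternary $m\in\Pol(\structA)$ satisfying $m(a_i,b_i,b_i)=a_i$ and $m(b_i,b_i,a_i)=a_i$ for every $i$ --- that is, a Mal'cev polymorphism. By hypothesis $\bar T\notin S$.

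\textbf{Main obstacle: from obstruction to $\structB_2$.} It remains to engineer a pp-definable binary relation $R'$ on $A^{2n}$ whose restriction to $\{\bar T,\bar G_1\}$ equals $\{(\bar G_1,\bar T),(\bar T,\bar G_1),(\bar G_1,\bar G_1)\}$. Together with the pp-definable singletons $\{\bar T\}$ and $\{\bar G_1\}$ (which are constant tuples), this would give a pp-power of $\structA$ into which $\structB_2$ embeds via $0\mapsto\bar T,\;1\mapsto\bar G_1$ and which retracts back onto $\structB_2$ via $\bar T\mapsto 0,\;x\mapsto 1$ for $x\neq\bar T$. The delicate point is excluding $(\bar T,\bar T)$ from $R'$: the naive choice --- the smallest $\Pol(\structA)$-invariant subset of $A^{4n}$ containing the three required tuples --- may contain $(\bar T,\bar T)$, because the absence of Mal'cev does not by itself forbid witnesses to the weaker mixed identities $m(x,y,x)\approx y$ and $m(x,y,y)\approx x$. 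Opr\v{s}al's argument overcomes this by iterating the obstruction construction so that $(\bar T,\bar T)\in R'$ would tautologically force a genuine Mal'cev polymorphism.
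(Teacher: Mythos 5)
Your forward direction is correct and matches the paper: by Theorem~\ref{thm:barto}, a minor homomorphism carries a quasi Mal'cev operation to one in $\Pol(\structB_2)$, which is idempotent because of the singleton relations, hence a Mal'cev operation on $\{0,1\}$; applying it to the three tuples of $R^{\structB_2}$ yields $(0,0)\notin R^{\structB_2}$.

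The backward direction has a genuine gap that you flag yourself but leave unresolved. Your $R'$, the $\Pol(\structA)$-invariant subset of $(A^{2n})^2$ generated by $(\bar G_1,\bar T)$, $(\bar T,\bar G_1)$, and $(\bar G_1,\bar G_1)$, can indeed contain $(\bar T,\bar T)$ without any Mal'cev polymorphism: unwinding the coordinates, $(\bar T,\bar T)\in R'$ only requires a ternary $t\in\Pol(\structA)$ with $t(x,y,x)=y$ and $t(x,y,y)=x$, which is strictly weaker than the Mal'cev condition $t(x,y,y)=x=t(y,y,x)$. Gesturing at ``iterating the obstruction construction'' does not close this; the fix is a different choice of generating pattern. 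The paper takes the domain to be $A^{|A|^2}$, read as binary operations $A^2\to A$, and defines $R^{\structC}$ as the set of pairs $(f,g)$ such that a \emph{single} ternary $w\in\Pol(\structA)$ satisfies $w(x,x,y)=f(x,y)$ and $w(y,x,x)=g(x,y)$, together with the unary relations $\{\pr^2_1\}$, $\{\pr^2_2\}$. This $R^{\structC}$ is preserved by $\Pol(\structA)$ (apply any $u\in\Pol(\structA)$ to the witnesses), hence pp-definable by Theorem~\ref{cor:DefAndCloneInclusion}. Mapping $0\mapsto\pr^2_2$ and $1\mapsto\pr^2_1$ on one side, and $\pr^2_2\mapsto 0$, everything else to $1$, on the other, gives the homomorphic equivalence with $\structB_2$: the pairs $(\pr^2_1,\pr^2_2)$, $(\pr^2_1,\pr^2_1)$, $(\pr^2_2,\pr^2_1)$ are witnessed by the three ternary projections, while $(\pr^2_2,\pr^2_2)\in R^{\structC}$ would force an idempotent $w$ with $w(x,x,y)=y=w(y,x,x)$, i.e.\ exactly a Mal'cev operation. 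The point you were missing is that the identification pattern generating $R^{\structC}$ must be the pair $(x,x,y),(y,x,x)$ appearing in $\Malcev$, not the pattern $(x,y,x),(x,y,y)$ forced by your generators; only then does excluding the diagonal correspond exactly to the failure of $\Malcev$.
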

\begin{proof}
Note that by Corollary~\ref{cor:coresAndIdentities} it is sufficient to prove the claim for a core expanded by all unary relations; we therefore assume $\structA$ to be such a structure.
Suppose that $\structA$ pp-constructs $\structB_2$ and that $\Pol(\structA)$ satisfies $\Malcev$. By Theorem~\ref{thm:barto} it follows that $\Pol(\structB_2)\models\Malcev$. Hence, there exists a ternary operation $m\in\Pol(\structB_2)$ such that $m(0,1,1)=m(1,1,0)=m(0,0,0)=0$ (note that $m$ must preserve $\{0\}$). Since $m$ is a polymorphism of $\structB_2$, we obtain that $(0,0)$ is a tuple in a relation of $\structB_2$, a contradiction. For the other direction,
assume that 
$\Pol(\structA)$ does not satisfy
$\Sigma_{\operatorname{M}}$.
Our goal is to build a pp-power $\structC$ of $\structA$ that is homomorphically 
equivalent to $\structB_2$.
The domain of $\structC$ is $A^{|A|^{2}}$ and the elements of 
$\structC$ can be interpreted as operations
$A^{2}\to A$.
The structure $\structC$ has a binary relation $R^{\structC}$ and two unary relations $\{\pr^2_2\}$ and $\{\pr^2_1\}$; the relation $R^{\structC}$ is defined as
the set of all pairs $(f,g)$ such that 
there exists a ternary operation $w\in\Pol(\structA)$
such that $w(x,x,y) = f(x,y)$ and $w(y,x,x) = g(x,y)$.

Let us show that any $s$-ary operation $u\in\Pol(\structA)$
preserves $R^{\structC}$.
Consider some tuples $(f_1,g_1),\dots,(f_s,g_s)$ from $R^{\structC}$.
By definition, for every $i$, there exists 
$w_i\in\Pol(\structA)$ such that 
$w_i(x,x,y) = f_i(x,y)$ and $w_i(y,x,x) = g_i(x,y)$.
The operation $u$ applied to the considered tuples 
gives us 
the tuple $(f,g)$ where 
\begin{align*}
    f(x,y) &= u(f_1(x,y),\dots,f_s(x,y)), \text{and}
    \\g(x,y) &= u(g_1(x,y),\dots,g_s(x,y)).
\end{align*}
Since
$f(x,y) = w(x,x,y)$ and 
$g(x,y) = w(y,x,x)$ for 
\[w(x,y,z) \coloneqq u(w_1(x,y,z),\dots,w_s(x,y,z)),\]
we obtain that $(f,g)$ is in $R^{\structC}$. By Theorem \ref{cor:DefAndCloneInclusion}, 
$R^{\structC}$ is pp-definable over $\structA$. Since every $u\in\Pol(\structA)$ is idempotent, $u$ preserves two unary relations $\{\pr^2_2\}$ and $\{\pr^2_1\}$; thus both $\{\pr^2_2\}$ and $\{\pr^2_1\}$ are pp-definable over $\structA$.

It remains to define homomorphisms 
from $\structB_2$ to $\structC$ and back.
The homomorphism $h\colon\structB_2\to\structC$
maps $0$ to $\pr^2_2$ and
maps $1$ to the first projection $\pr^2_1$. 
Since all projections are in $\Pol(\structA)$ 
we have the tuples $(\pr^2_1,\pr^2_2)$, $(\pr^2_1,\pr^2_1)$, and $(\pr^2_2,\pr^2_1)$ in $R^{\structC}$. Hence $h$ is a
homomorphism.
The homomorphism $h'\colon\structC\to\structB_2$
maps $\pr^2_2$ to $0$ and all the remaining values to 
$1$. Note that the tuple $(\pr^2_2,\pr^2_2)$ is not in $R^{\structC}$, otherwise there would exist an operation $w\in\Pol(\structA)$ such that $\pr^2_2(x,y) = w(x,x,y) = w(y,x,x)=\pr^2_2(y,y)=w(y,y,y)$; this cannot hold since we assumed that $\Pol(\structA)$ does not satisfy $\Sigma_{\operatorname{M}}$.
This proves that $h'$ is a homomorphism. Thus, $\structA\ \leqcon\ \structB_2$ as desired.\end{proof}

Recall the relational structure $\structC_p$ introduced in Remark~\ref{Ex:cyclesAndCyclic}.

\begin{Thm}[\cite{cyclesBodirskyStrakeVucaj,DimaStrongSubalg}]\label{thm:splittingCycles}
Let $\structA$ be a finite structure. For every prime $p$, $\structA$ pp-constructs $\structC_p$ if and only if $\Pol(\structA)$ does not satisfy $\Sigma_p$.
\end{Thm}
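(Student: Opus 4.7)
The forward direction will be the easy one: Remark~\ref{Ex:cyclesAndCyclic} shows that $\Pol(\structC_p)\not\models\Sigma_p$, so by Theorem~\ref{thm:barto} any $\structA$ that pp-constructs $\structC_p$ must also fail $\Sigma_p$ in its polymorphism clone. My plan is therefore to focus on the converse.

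For the converse, assume $\Pol(\structA)\not\models\Sigma_p$; the aim is to construct a pp-power of $\structA$ that is homomorphically equivalent to $\structC_p$, following the blueprint of Theorem~\ref{thm:splittingMalcev}. By Corollary~\ref{cor:coresAndIdentities} I first pass to the core of $\structA$ expanded by all unary singleton relations, which makes every polymorphism idempotent without affecting either pp-constructability or the satisfaction of $\Sigma_p$. I then take the pp-power $\structC$ whose domain is $A^{|A|^p}$, interpreting elements as $p$-ary operations $f\colon A^p\to A$, and equip it with a single binary relation $R^{\structC}$ consisting of all pairs $(f,g)$ for which there exists $w\in\Pol(\structA)$ of arity $p$ with $w(x_1,\dots,x_p)=f(x_1,\dots,x_p)$ and $w(x_2,\dots,x_p,x_1)=g(x_1,\dots,x_p)$; equivalently, $f$ is a $p$-ary polymorphism of $\structA$ and $g=f_\sigma$, where $\sigma$ denotes the cyclic left shift $(x_1,\dots,x_p)\mapsto(x_2,\dots,x_p,x_1)$. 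The pp-definability of $R^{\structC}$ in $\structA$ should follow exactly as in Theorem~\ref{thm:splittingMalcev}: any $s$-ary polymorphism $u$ applied componentwise to tuples $(f_i,g_i)\in R^{\structC}$ produces a pair $(w,w_\sigma)$ with $w\coloneqq u(f_1,\dots,f_s)\in\Pol(\structA)$ by closure under composition, whence Theorem~\ref{thm:InvPol} gives pp-definability.

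It remains to build homomorphisms in both directions between $\structC_p$ and $\structC$. The map $h\colon\structC_p\to\structC$ sending $i\mapsto\pr^p_{i+1}$ should work directly: a short computation shows $(\pr^p_j)_\sigma=\pr^p_{j+1}$ for $j<p$ and $(\pr^p_p)_\sigma=\pr^p_1$, so the projections form a single $p$-cycle inside $R^{\structC}$ that covers exactly the edges of $\structC_p$. The reverse homomorphism $h'\colon\structC\to\structC_p$ is where the hypothesis actually gets used, and I expect it to be the main point of the argument. Observe that $R^{\structC}$ is a partial function whose domain of definedness is precisely $\Pol(\structA)^{(p)}$, on which it acts as the permutation induced by $\sigma$; vertices outside $\Pol(\structA)^{(p)}$ are isolated in $\structC$ and may be sent to any element of $\structC_p$. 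Because $p$ is prime, each $\sigma$-orbit on $\Pol(\structA)^{(p)}$ has size dividing $p$ and hence equal to $1$ or $p$, and the assumption $\Pol(\structA)\not\models\Sigma_p$ rules out fixed points; so every orbit has exactly $p$ elements and I can label it coherently by $0,1,\dots,p-1$ along $\sigma$ to obtain a valid $h'$. Primality of $p$ is the only non-routine ingredient: without it the orbits could have intermediate lengths dividing $p$ and the labelling would fail, which is precisely why the statement is restricted to prime $p$.
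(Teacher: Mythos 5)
Your proposal is correct and follows essentially the same route as the paper: both construct the free structure on $A^{|A|^p}$ with the binary relation of pairs $(f, f_\sigma)$ for $p$-ary polymorphisms $f$, verify pp-definability via closure under composition, and use primality to show every $\sigma$-orbit on $\Pol(\structA)^{(p)}$ has size exactly $p$ (the paper phrases this via intersections of cyclic-shift classes $F_i$, you phrase it via orbit sizes, but these are identical arguments). The only cosmetic differences are that you pass to the idempotent core first (harmless but unnecessary here) and that you choose the concrete representative $\pr^p_1$ for the homomorphism $\structC_p\to\structC$ where the paper leaves the representative abstract.
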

\begin{proof}
Suppose that $\structA$ pp-constructs $\structC_p$, for some prime $p$, and that $\Pol(\structA)$ satisfies $\Sigma_p$; by Theorem~\ref{thm:barto} we obtain that $\Pol(\structC_p)$ satisfies $\Sigma_p$, too. This would lead to a contradiction, as shown in Remark~\ref{Ex:cyclesAndCyclic}.
For the other direction, assume that $\Pol(\structA)$ does not satisfy the cyclic identity $\Sigma_p$.
Let us build a pp-power $\structB$  of $\structA$ which is homomorphically equivalent to $\structC_p$.
The domain of 
$\structB$ is $A^{|A|^{p}}$ and every element of the domain can be viewed as an operation $A^{p}\to A$. The structure $\structB$ has only one 
binary relation $R^{\structB}$
which is defined as follows:
it consists of all pairs 
$(f,g)$ such that 
$f,g\in\Pol(\structA)$ and 
$g(x_1,\dots,x_p) = f(x_2,\dots,x_{p},x_1)$
(here we interpret operations as tuples). 

Let us partition all the operations 
of $\Pol(\structA)$ of arity $p$ into equivalence classes 
such that two operations are equivalent if 
one can be obtained from another by a cyclic shift of the variables.
Choose one representative from each class
and denote the obtained set of operations 
by $F_{0}$.
Let us define 
\[F_{i}=\{g\mid \exists f\in F_0\colon g(x_1,\dots,x_p) = 
f(x_i,\dots,x_p,x_1,\dots,x_{i-1})\}.\]
Thus, operations from $F_{i}$ 
are obtained from 
operations from $F_{0}$ by the corresponding cyclic shift of the variables.
If $F_{i}\cap F_{j}\neq\varnothing$ for some $i\neq j$, 
then there is an operation that stays the same after some cyclic shift of the variables. Since $p$ is prime, we derive that this operation is cyclic, which contradicts our assumption.

Thus, we assume that 
$F_{i}\cap F_{j}=\varnothing$ whenever $i\neq j$.
Let us show that any operation $w\in\Pol(\structA)$ preserves 
$R^{\structB}$. 
Consider tuples 
$(f_1,g_1),\dots,(f_s,g_s)$ from $R^{\structB}$.
Then $w$ applied to these tuples gives us 
the pair $(f,g)$, where
\begin{align*}
    f(x_1,\dots,x_p) &= w(f_1(x_1,\dots,x_p),\dots,f_s(x_1,\dots,x_p)), \text{and}
    \\g(x_1,\dots,x_p) &= w(g_1(x_1,\dots,x_p),\dots,g_s(x_1,\dots,x_p)).
\end{align*}
Hence, $f$ and $g$ are from $\Pol(\structA)$.
Moreover, it holds that 
$g(x_1,\dots,x_p) = f(x_2,\dots,x_p,x_1)$.
Thus, we showed that $w$ preserves $R^{\structB}$. By Theorem \ref{cor:DefAndCloneInclusion}, 
$R^{\structB}$ is pp-definable over $\structA$. Thus, we proved that $\structB$ is a pp-power of $\structA$.

It remains to show that
$\structB$ and $\structC_p$ are homomorphically equivalent.
The homomorphism $h\colon\structB\to\structC_p$
just sends operations from 
$F_{i}$ to $i$, for every $i$.
To define the homomorphism $h'\colon\structC_p\to\structB$
we just choose some operation $f_0\in F_{0}$, then 
consider all its cyclic shifts 
$f_{i}\in F_{i}$. Then $h'$ sends each $i$ to $f_{i}$.
Thus, $\structA$ pp-constructs $\structC_p$.
\end{proof}

We would like to mention that the proofs in this section can be considered rather standard, since in the literature the auxiliary structures that we define in Theorems~\ref{thm:splittingMalcev} and~\ref{thm:splittingCycles} -- denoted by $\structC$ and $\structB$, respectively -- are known as the free structure of $\Pol(\structA)$ generated by $\structB_2$ and the free structure of $\Pol(\structA)$ generated by $\structC_p$, respectively. To keep the proofs simple and self-contained we refrain from defining free structures and refer the interested reader to \cite{BBKO}.

\section{Submaximal elements of $\mathfrak P_3$}\label{sec:subOn3}
In this section we prove that $\overline{\cloC_2}$, $\overline{\cloC_3}$, and $\overline{\cloB_2}$ are the only submaximal elements of $\mathfrak{P}_3$. 
In particular, we show that if $\cloS$ is an idempotent clone over $\{0,1,2\}$ such that 
\begin{equation*}\tag{$\spadesuit$}\label{eq:submax}
    \cloS\ \nleqm\ \cloC_2,\ 
    \cloS\ \nleqm\ \cloC_3, \text{ and }
    \cloS\ \nleqm\ \cloB_2
\end{equation*}
then there exists a minor homomorphism from $\cloI_2$ to $\cloS$, i.e., $\cloI_2\ \leqm\ \cloS$. In order to prove this, we show that every idempotent clone over $E_3$ satisfying \eqref{eq:submax} has a generalized minority of arity $k$, for every odd $k\geq 3$ -- we are going to define such operations in Section~\ref{sec:oddition} -- and a totally symmetric operation of arity $n$, for every $n\geq 2$. Recall that we say that an $n$-ary operation $f$ is totally symmetric if it satisfies the condition $\operatorname{TS}(n)$ from Definition \ref{def:symmMinorConditions}.
Also, note that we can reduce to the case where we only consider idempotent clones: in fact, every clone $\cloS=\Pol(\structS)$ is minor-equivalent to some idempotent clone, that is, $\cloS\ \eqm\ \Pol(\structS')$ where $\structS'$ is the core of $\structS$ expanded by all unary relations (see Corollary~\ref{cor:coresAndIdentities}).
As a first step, we want to prove that every idempotent clone $\cloS$ over $E_3$ satisfying \eqref{eq:submax} has a majority operation. For this purpose, we introduce some more notions and terminology concerning relations. In particular, we are going to consider essential and critical relations.

\begin{Def}
Let $R$ be an $n$-ary relation on a finite set $A$. We say that 
\begin{itemize}
    \item 
    $R$ is an \emph{essential relation} if it cannot be represented by a quantifier-free conjunctive formula over relations 
    of smaller arities. Moreover, a tuple $(a_1,\dots,a_n)\in A^n\setminus R$ is \emph{essential for} $R$ if for every $i\in \{1,\dots,n\}$ there exists $b$ such that \[(a_1,\dots,a_{i-1},b,a_{i+1},\dots,a_n)\in R.\]
        We denote by $\operatorname{Ess}(R)$ the set of all essential tuples for $R$.
        It is not hard to see that a relation has an essential tuple if and only if it is essential~\cite{MinimalClones,Zhuk15}.
    \item $R$ is \emph{critical} if it is essential and there do not exist relations $R_1,\dots,R_m$ pp-definable from $R$ and different from $R$ such that $R = R_1\cap \dots \cap R_m$.
\end{itemize}
\end{Def}

Notice that originally in~\cite{KearnesSzendrei12} a critical relation was defined to be a directly indecomposable and completely $\cap$-irreducible relation in a relational clone; our definition of critical relation relates to the original one as follows: a relation is critical  if and only if it is critical -- in the sense of \cite{KearnesSzendrei12} -- in the relational clone generated by the relation.
As it was shown in \cite{ZhukKeyCritical}, every critical 
relation has internal symmetries, and relations having these symmetries are called key relations.
Thus, every critical relation is a key relation (see \cite{ZhukKeyCritical}, Lemma 2.4).
In this section we will use a classification
of key relations 
preserved by a weak unanimity operation 
from \cite{ZhukKeyCritical} but to avoid additional 
notations we formulate it for critical relations.

To formulate the classification we will need the notion of block of a relation $R$ over a finite domain $A$. We denote by $\tilde{R}$ the relation $R\cup \operatorname{Ess}(R)$. Again following \cite{ZhukKeyCritical}, we define a graph $\mathbb{G}_{\tilde{R}}\coloneqq(\tilde{R};E)$ as follows: if $\boldsymbol{a}$, $\boldsymbol{b}\in \tilde{R}\subseteq A^n$, then we have $(\boldsymbol{a},\boldsymbol{b})\in E$ if and only if $\boldsymbol{a}$ and $\boldsymbol{b}$ differ just in one element, i.e., there exists a unique $i\in\{1,\dots,n\}$ such that $a_i\neq b_i$. A \emph{block} of $R$ is a connected component of $\mathbb{G}_{\tilde{R}}$. A block is called \emph{trivial} if it only contains tuples from $R$.

\begin{Thm}[c.f. \cite{ZhukKeyCritical}, Theorem 3.11]\label{thm:DimaKeyCritical}
Let $R$ be a critical relation of arity $n\geq 3$, preserved by a Mal'cev operation. Then
\begin{itemize}
    \item Every block of $R$ equals $B_1 \times \cdots\times B_n$, for some $B_1,\dots,B_n\subseteq A$.
    \item For every nontrivial block $\textbf{B}\coloneqq B_1 \times \cdots\times B_n$ of $R$, the intersection $R\cap\textbf{B}$ can be defined as follows: there exists an abelian group $(G;+,-,0)$ whose order is a power of a prime, and surjective mappings $\phi_i\colon B_i\to G$, for $i=1,2,\dots,n$ such that
    \[R\cap\textbf{B} = \{(x_1,\dots,x_n)\mid \phi_1(x_1) + \phi_2(x_2) +\ldots +\phi_n(x_n) = 0\}.\]
\end{itemize}
\end{Thm}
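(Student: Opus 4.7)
The plan is to handle the two conclusions in order, using the Mal'cev operation $m$ first to enforce product structure on blocks and then to extract an abelian group on the nontrivial part of a block. For the first item, fix a block $\mathbf{B}$, set $B_i \coloneqq \pr_i(\mathbf{B})$, and aim at the reverse inclusion $\prod B_i \subseteq \mathbf{B}$. The Mal'cev identities $m(x,y,y) = x = m(y,y,x)$ imply that, given a reference tuple $\mathbf{a} \in R$ and modifications $\mathbf{a}^{(i)}, \mathbf{a}^{(j)} \in R$ differing from $\mathbf{a}$ only at coordinates $i$ and $j$ respectively, the tuple $m(\mathbf{a}^{(i)}, \mathbf{a}, \mathbf{a}^{(j)}) \in R$ agrees with $\mathbf{a}^{(i)}$ at $i$, with $\mathbf{a}^{(j)}$ at $j$, and with $\mathbf{a}$ elsewhere. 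Using connectivity in $\mathbb{G}_{\tilde R}$ to witness every coordinate value in $B_i$ via a chain of single-coordinate alterations, and iterating the above patching over pairs of coordinates, one produces every tuple of $\prod B_i$ inside $\mathbf{B}$.

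For the second item, fix a nontrivial block $\mathbf{B} = \prod B_i$ and put $S \coloneqq R \cap \mathbf{B}$. Since $\mathbf{B}$ contains at least one essential tuple, $S$ is a proper non-empty Mal'cev subuniverse of $\prod B_i$. Pick a base point $\mathbf{e} \in S$; standard structure theory for Mal'cev algebras then produces an abelian group $G$ and surjections $\phi_i \colon B_i \to G$ such that, after translating $S$ by $\mathbf{e}$, one has $S = \{\mathbf{x} \in \prod B_i \mid \sum_i \phi_i(x_i) = 0\}$. Concretely, one defines an equivalence $\sim_i$ on $B_i$ by $a \sim_i b$ whenever $S$ contains two tuples that agree outside coordinate $i$ and carry the values $a$ and $b$ there; the Mal'cev operation upgrades these quotients to a coherent family of surjections onto a common abelian group $G$.

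Finally, criticality of $R$ forces $|G|$ to be a prime power. If $G = G_1 \oplus G_2$ with $|G_1|, |G_2| > 1$ coprime, then the Chinese Remainder Theorem splits the defining equation $\sum \phi_i(x_i) = 0$ into two independent equations in $G_1$ and $G_2$; glued consistently with the behaviour of $R$ on all its other blocks, this writes $R$ as a proper intersection of two pp-definable relations, contradicting criticality. The main obstacle is the middle step: passing from the Mal'cev subuniverse $S$ to a clean abelian group presentation rests on the non-trivial general fact that every subuniverse of a product preserved by a Mal'cev operation has coset structure, and the final gluing across blocks must be executed with care so that the decomposition of $S$ extends to a genuine decomposition of $R$.
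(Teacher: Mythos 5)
The paper does not prove this theorem; it is cited verbatim from Zhuk's work on key/critical relations (\cite{ZhukKeyCritical}, Theorem~3.11), so there is no in-paper proof to compare against. Your proposal must therefore stand on its own, and it has a genuine gap in the crucial first step.

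Your argument for the product structure of blocks rests on a Mal'cev patching move applied to tuples $\mathbf{a}, \mathbf{a}^{(i)}, \mathbf{a}^{(j)}$ that all lie in $R$ and differ in only one coordinate. But a block is a connected component of $\mathbb{G}_{\tilde R}$ where $\tilde R = R \cup \operatorname{Ess}(R)$, and in a nontrivial block the connectivity typically passes through essential tuples that are \emph{not} in $R$. For instance, if $R = \{(x_1,\dots,x_n) \in G^n : x_1 + \dots + x_n = 0\}$ for a nontrivial abelian group $G$, then no two distinct tuples of $R$ differ in exactly one coordinate, so your patching argument never gets off the ground; every edge of the block uses an essential tuple. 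The Mal'cev operation preserves $R$ but does \emph{not} preserve $\operatorname{Ess}(R)$ or $\tilde R$, so you cannot apply it to walk around the block, and even when you can patch you only produce $R$-tuples, never the essential tuples that the block must also contain. Recovering the product structure of the block genuinely requires using criticality (equivalently, the key-relation structure), not merely the Mal'cev operation; this is the actual content of the cited theorem.

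Two smaller issues: in the second item you assert $R \cap \mathbf{B}$ is a Mal'cev subuniverse of $\prod B_i$, which implicitly requires each $B_i$ to be closed under $m$; this is true (the paper exhibits a pp-definition of $B_i$ in the proof of Lemma~\ref{lem:majorityWithKey}) but you do not justify it. And for the prime-power conclusion you correctly identify that you must turn the local splitting of one block into a global $\cap$-decomposition of $R$, but you leave that gluing step unresolved, and it is precisely where the criticality hypothesis does the work.
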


\begin{Thm}[\cite{BP}]\label{thm:BakerPixley}
Let $\cloC$ be an idempotent clone over a finite set. Then, for every $k\geq 2$, the following are equivalent:
\begin{itemize}
    \item $\cloC$ has a near-unanimity operation of arity $k+1$;
    \item every $(k+1)$-ary relation in $\Inv(\cloC)$ can be obtained as a conjunction of relations of arity $k$ in $\Inv(\cloC)$.
\end{itemize}
\end{Thm}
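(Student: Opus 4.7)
The plan is to prove the two directions separately; the $(\Rightarrow)$ direction is a direct application of the near-unanimity identities, while the $(\Leftarrow)$ direction is the substantive part and mimics the classical Baker--Pixley free-algebra construction.

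For the direction ``NU of arity $k+1$ implies conjunction-of-$k$-ary'', assume $\cloC$ contains such an operation $n$. Given $R\in\Inv(\cloC)$ of arity $k+1$, I would define, for each $i\in\{1,\dots,k+1\}$, the $k$-ary relation
\[
R_i(x_1,\dots,\hat x_i,\dots,x_{k+1})\coloneqq\exists x_i\,R(x_1,\dots,x_{k+1}),
\]
which lies in $\Inv(\cloC)$. The claim is that $R=\bigwedge_{i=1}^{k+1}R_i(x_1,\dots,\hat x_i,\dots,x_{k+1})$. The inclusion ``$\subseteq$'' is immediate. For the reverse, given $t=(t_1,\dots,t_{k+1})$ lying in every $R_i$, pick witnesses $r_i\in R$ agreeing with $t$ on all coordinates except possibly $i$; applying $n$ componentwise to $r_1,\dots,r_{k+1}$ yields a tuple of $R$ (since $n$ preserves $R$), and at each coordinate $j$ only the entry $r_j[j]$ can differ from $t_j$, so the NU identities force this coordinate to equal $t_j$. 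Hence the componentwise application produces $t$, which thus lies in $R$.

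For the converse, the plan is to work in the $2$-generated free algebra $\mathbb{F}_2$ of the variety generated by $(A;\cloC)$, realized concretely as the finite subalgebra of $A^{A^2}$ generated by the binary projections $\pr^2_1$ and $\pr^2_2$. Inside $\mathbb{F}_2^{k+1}$ I would define the near-unanimity generators $w_1,\dots,w_{k+1}$, where $w_j$ has $\pr^2_2$ in position $j$ and $\pr^2_1$ in every other position, and let $\mathbb{R}$ denote the subalgebra they generate. The key reduction is that $\cloC$ contains a $(k+1)$-ary NU operation if and only if $(\pr^2_1,\dots,\pr^2_1)\in\mathbb{R}$: a witnessing term $f$ satisfying $f(w_1,\dots,w_{k+1})=(\pr^2_1,\dots,\pr^2_1)$ yields, in coordinate $i$, an equation $f(\pr^2_1,\dots,\pr^2_2,\dots,\pr^2_1)=\pr^2_1$ (with $\pr^2_2$ in position $i$) which evaluated on the free generators is exactly the $i$-th NU identity. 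Moreover, this target tuple already satisfies every $k$-ary projection constraint of $\mathbb{R}$: for each $i_0$, deleting coordinate $i_0$ sends the generator $w_{i_0}$ to the constant $k$-vector $(\pr^2_1,\dots,\pr^2_1)$, so this $k$-vector belongs to the $i_0$-th $k$-ary projection of $\mathbb{R}$.

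The main obstacle is then transferring the hypothesis from $(k+1)$-ary relations on $A$ to the relation $\mathbb{R}$, which viewed through the embedding $\mathbb{F}_2\hookrightarrow A^{A^2}$ becomes a $(k+1)\cdot|A|^2$-ary $\cloC$-invariant relation on $A$. I would bridge this gap by an auxiliary extension lemma: if every $(k+1)$-ary invariant relation on $A$ is a conjunction of $k$-ary invariants, then the same holds for every $n$-ary invariant relation with $n\geq k+1$. The proof is by induction on $n$, using that every $(k+1)$-ary projection of an $n$-ary $\cloC$-invariant relation is itself in $\Inv(\cloC)$ and hence, by the base case, decomposes into $k$-ary conjuncts; combining these decompositions across all $(k+1)$-subsets of coordinates---essentially iterating the componentwise argument from the $(\Rightarrow)$ direction---yields the $n$-ary decomposition. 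This inductive extension, which is the technical heart of the classical Baker--Pixley argument, then allows me to apply the hypothesis to the lifted relation and conclude $(\pr^2_1,\dots,\pr^2_1)\in\mathbb{R}$, thereby extracting the desired near-unanimity operation.
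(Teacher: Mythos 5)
The paper does not prove this result---it cites it directly to Baker and Pixley---so there is no internal argument to compare against; your write-up is an independent reconstruction. Your $(\Rightarrow)$ direction is correct, and the skeleton of the $(\Leftarrow)$ direction (realize $\mathbb{F}_2$ inside $A^{A^2}$, reduce the existence of a near-unanimity term to membership of $(\pr^2_1,\dots,\pr^2_1)$ in the subalgebra $\mathbb{R}\le\mathbb{F}_2^{k+1}$ generated by $w_1,\dots,w_{k+1}$) is the right one. There is, however, a genuine gap in the extension lemma, which is the technical heart of this direction. You propose to obtain the $n$-ary decomposition by ``iterating the componentwise argument from the $(\Rightarrow)$ direction,'' but that argument glues the witnesses $r_1,\dots,r_{k+1}$ together by applying a near-unanimity operation to them; in the $(\Leftarrow)$ direction no such operation is available yet---its existence is precisely what you are trying to establish. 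Knowing that a tuple $t$ lies in every $(k+1)$-ary projection of an $n$-ary relation $R$ does not by itself put $t$ in $R$, so the step as written is circular.

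The lemma is true, but the argument needs a different ingredient, and this is exactly where the idempotency hypothesis is used in an essential way. Induct on $n\ge k+1$: given $t=(t_1,\dots,t_n)$ lying in every $k$-ary projection of an $n$-ary $R\in\Inv(\cloC)$, form the $(n-1)$-ary relation $R'\coloneqq\{(x_1,\dots,x_{n-1})\mid (x_1,\dots,x_{n-1},t_n)\in R\}$, which lies in $\Inv(\cloC)$ because $\{t_n\}\in\Inv(\cloC)$ by idempotency. For each $k$-subset $I\subseteq\{1,\dots,n-1\}$, the $(k+1)$-ary relation $\pr_{I\cup\{n\}}(R)$ is determined by its $k$-ary projections (base case), so $t|_{I\cup\{n\}}\in\pr_{I\cup\{n\}}(R)$, and the witnessing tuple shows $t|_I\in\pr_I(R')$. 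The inductive hypothesis applied to $R'$ then gives $(t_1,\dots,t_{n-1})\in R'$, i.e.\ $t\in R$. A smaller point you should also make explicit: you verify that the target lies in the $k+1$ block projections of $\mathbb{R}$ viewed over $\mathbb{F}_2$, but to invoke the extension lemma you need it to lie in every $k$-ary projection of $\mathbb{R}$ viewed as a relation of arity $(k+1)|A|^2$ over $A$. This does hold, because any $k$ of those coordinates can meet at most $k$ of the $k+1$ blocks; some block $i_0$ is untouched, and $w_{i_0}$ agrees with the target everywhere outside block $i_0$.
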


We want to remark that, if an idempotent clone $\cloC$ does not have a near-unanimity operation of arity $k$, then $\Inv(\cloC)$ has an essential relation $R$ of arity $k$. 
Furthermore, this essential relation can be represented as a conjunction of critical relations and the arity of at least one of them should also be $k$. Thus, 
every clone $\cloC$ not having a near-unanimity operation of arity $k$ preserves a critical relation of arity $k$.

\begin{Lem}\label{lem:majorityWithKey}
Let $\cloC$ be an idempotent clone over $E_n$, for some $n\geq 2$, such that 
\begin{enumerate}
    \item $\cloC\models\Sigma_p$, for every prime $p\leq n$, and
    \item $\cloC\models\Sigma_{\mathrm{M}}$, i.e., $\cloC$ has a Mal'cev operation.
\end{enumerate}
Then $\cloC$ has a majority operation.
\end{Lem}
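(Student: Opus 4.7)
I would argue by contradiction, combining Theorems~\ref{thm:BakerPixley} and~\ref{thm:DimaKeyCritical}. By Theorem~\ref{thm:BakerPixley} with $k=2$, $\cloC$ has a majority operation if and only if every ternary relation in $\Inv(\cloC)$ is a conjunction of binary ones; so assuming $\cloC$ has no majority, and using the remark immediately following Theorem~\ref{thm:BakerPixley}, one obtains a critical ternary relation $R\in\Inv(\cloC)$. Hypothesis~(2) lets me apply Theorem~\ref{thm:DimaKeyCritical}: every block of $R$ is a product $B_1\times B_2\times B_3$, and on each nontrivial block the relation is the preimage under surjections $\phi_i\colon B_i\to G$ of $\{(y_1,y_2,y_3)\in G^3\mid y_1+y_2+y_3=0\}$, where $G$ is an abelian group of prime-power order $p^{\ell}$. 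Since $R$ is essential, some essential tuple exists, and it must sit inside a nontrivial block $\mathbf{B}$; from $p\le|G|\le|B_i|\le n$ and hypothesis~(1), $\cloC$ contains a $p$-ary cyclic operation $c$.

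The contradiction is to come from an incompatibility between $p$-ary cyclicity and the $p$-group structure. The key algebraic fact is that the ternary relation $\{y_1+y_2+y_3=0\}$ on a nontrivial abelian $p$-group $G$ admits no cyclic idempotent $p$-ary polymorphism: preserving the relation together with idempotency forces any such polymorphism to be an affine map $\bar c(y_1,\dots,y_p)=\sum_i\alpha_i y_i$ with $\alpha_i\in\mathrm{End}(G)$ and $\sum_i\alpha_i=\mathrm{id}_G$ (this is extracted from the homomorphism identity $\bar c(\mathbf{a}+\mathbf{b})=\bar c(\mathbf{a})+\bar c(\mathbf{b})$ that preservation of the linear relation produces); the cyclic identity then equates all $\alpha_i$'s to a common $\alpha$, whence $p\alpha=\mathrm{id}_G$, which is impossible because multiplication by $p$ is not invertible on a nontrivial $p$-group.

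The main obstacle is precisely the transfer step: the operation $c$ lives on $E_n$ and need not respect the fibres of the surjections $\phi_i$, so it does not a priori descend through them. I would address this using the Mal'cev operation $m$, which realizes the group law on the affine coset $R\cap\mathbf{B}$, to build a term operation in $c$ and $m$ whose restriction to $\mathbf{B}^p$ does factor through $\phi_1^p\times\phi_2^p\times\phi_3^p$ and induces a $p$-ary cyclic idempotent polymorphism of $\{\sum y_i=0\}$ on $G$ itself---essentially by averaging $c$ over fibres using $m$ as the additive structure on the coset. Once this descent is carried out, the algebraic impossibility displayed above forces the essential tuple to have zero group-sum, contradicting its essentiality, and we conclude that $\cloC$ must have a majority operation.
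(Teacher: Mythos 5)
Your setup matches the paper's: Baker--Pixley gives a critical relation $R$ of arity $\geq 3$, Theorem~\ref{thm:DimaKeyCritical} gives the block decomposition with an abelian group $G$ of prime-power order $p^{\ell}\le n$, and hypothesis~(1) supplies a $p$-ary cyclic $c\in\cloC$. Your algebraic computation on $G$ is also sound: any idempotent polymorphism of $\{y_1+y_2+y_3=0\}$ on $G$ is forced to be a homomorphism $G^p\to G$, cyclicity forces equal coefficients $\alpha$, idempotency forces $p\alpha=\mathrm{id}_G$, and this fails on any element of order $p$.

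The gap is the descent step, exactly as you flag it, and I do not think your sketch closes it. You would need to show that some term built from $c$ and the Mal'cev operation, restricted to $\mathbf{B}^p$, factors through $\phi_1^p\times\phi_2^p\times\phi_3^p$; equivalently, that the kernels of the $\phi_i$ are congruences on the subalgebras $B_i$ respected by that term. That requires an argument (e.g., a pp-definability argument for $\ker\phi_i$ using $R$ and constants), and ``averaging $c$ over fibres using $m$ as the additive structure on the coset'' is not a construction---it is a wish. Your final sentence is also off: even granting the descent, the contradiction is not that ``the essential tuple has zero group-sum''; it is simply that $G$ would carry an impossible polymorphism. The paper sidesteps the entire quotient issue: it picks sections $\psi_i\colon G\to B_i$, a group element $a$ of order $p$, and builds two $k\times p$ matrices whose columns lie in $R\cap\mathbf{B}$ and whose first two rows are, respectively, identical and a cyclic shift of one another, while rows $\ge 4$ are constant. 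Applying $c$ row-wise (using cyclicity for row 2, idempotency for the constant rows, and the fact that $c$ preserves each $B_i$, which is pp-definable from $R$ and constants) yields two tuples in $R\cap\mathbf{B}$ agreeing everywhere except in the third coordinate, where the $\phi_3$-images are $0$ and $a\neq 0$, contradicting the linear constraint. This direct argument never leaves $E_n$ and requires no factoring through the $\phi_i$'s; to complete your proof you either need to actually prove the descent lemma, or switch to this kind of in-domain matrix argument.
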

\begin{proof}
Let $\cloC$ be a clone satisfying all the hypotheses and suppose that $\cloC$ does not have a ternary near-unanimity operation, i.e., a majority operation. Then by Theorem~\ref{thm:BakerPixley} we have that $\Inv(\cloC)$ has a critical relation $R$ of arity $k\geq 3$. Therefore, by Theorem~\ref{thm:DimaKeyCritical}, for every nontrivial block $\textbf{B}$ of $R$, there exists an abelian group $\mathbf{G}=(G,+,-,0)$ whose order $\ell\leq n$ is the power of some prime 
and surjective mappings $\phi_i\colon B_i\to G$, for $i=1,2,\dots,k$ such that $R\cap\textbf{B} = \{(x_1,\dots,x_k)\mid \phi_1(x_1) + \phi_2(x_2) +\ldots +\phi_k(x_k) = 0\}$.

Let us show that the relation $R$ cannot be preserved by a cyclic operation $c_{p}$ of arity $p$, where $p$ divides $\ell$.
Choose a mapping $\psi_{i}\colon G\to B_{i}$, for every $i$,
such that $\phi_{i}(\psi_{i}(x)) = x$, for every $x\in G$.
Let $a$ be an element in $G$ of order $p$.
Notice, that 
\[B_{1}(x_1) = 
\exists x_2 \dots \exists x_k\;  \bigwedge_{i=2}^{k} R(x_1,\psi_{2}(0),\dots,\psi_{i-1}(0),x_i,\psi_{i+1}(0),\dots,\psi_{k}(0)),\]
which means that $B_{1}$ is pp-definable from $R$ and constants. Combining this with the idempotency of $\cloC$ we derive that 
the cyclic operation $c_p$ preserves $B_{1}$.
Similarly, we show that $c_{p}$ preserves $B_{i}$, for every $i$.
Applying 
$c_{p}$ to the rows of the matrices 
\[\begin{pmatrix}
\psi_{1}(0) &\psi_1(a)&\psi_{1}(2a)&\dots&\psi_{1}((p-1)a)\\
\psi_{2}(0) &\psi_2(-a)&\psi_{2}(-2a)&\dots&\psi_{2}(-(p-1)a)\\
\psi_{3}(0) &\psi_3(0)&\psi_{3}(0)&\dots&\psi_{3}(0)\\
\vdots&\vdots&\vdots&\ddots&\vdots\\
\psi_{k}(0) &\psi_k(0)&\psi_{k}(0)&\dots&\psi_{k}(0)
\end{pmatrix}\]
\[\begin{pmatrix}
\psi_{1}(0) &\psi_1(a)&\dots&\psi_{1}((p-2)a)&\psi_{1}((p-1)a)\\
\psi_{2}(-a) &\psi_2(-2a)&\dots&\psi_{2}(-(p-1)a)&\psi_{2}(0)\\
\psi_{3}(a) &\psi_3(a)&\dots&\psi_{3}(a)&\psi_{3}(a)\\
\psi_{4}(0) &\psi_4(0)&\dots&\psi_{4}(0)&\psi_{4}(0)\\
\vdots&\vdots&\vdots&\vdots&\vdots\\
\psi_{k}(0) &\psi_k(0)&\dots&\psi_{k}(0)&\psi_{k}(0)
\end{pmatrix}\]
we get respectively the tuples 
\begin{align*}
    &(c,d,\psi_{3}(0),\psi_{4}(0),\dots,\psi_{k}(0)), \text{ and }
    \\&(c,d,\psi_{3}(a),\psi_{4}(0),\dots,\psi_{k}(0))
\end{align*}
from $R\cap \textbf{B}$,
which contradicts the definition of $R\cap \textbf{B}$.
This contradiction proves that such a relation $R$ cannot exist in $\Inv(\cloC)$, thus $\cloC$ has a majority operation.
\end{proof}
\begin{Rem}
The result presented in Lemma~\ref{lem:majorityWithKey} can be alternatively proved using an argument coming from Tame Congruence Theory (see~\cite{HobbyMcKenzie}). One can show that, assuming the same hypotheses as in Lemma~\ref{lem:majorityWithKey}, a sufficient condition for the existence of a majority operation provided in \cite{HagemannHerrArithmetical} holds.
\end{Rem}

\begin{Cor}\label{cor:SHasMaj}
Let $\cloS$ be an idempotent clone over $E_3$ such that $\cloS\ \nleqm\ \cloC_2$, $\cloS\ \nleqm\ \cloC_3$, and $\cloS\ \nleqm\ \cloB_2$. Then $\cloS$ has a symmetric majority operation.
\end{Cor}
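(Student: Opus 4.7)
The plan is to translate the three hypotheses into concrete operations in $\cloS$ via the splitting theorems, apply Lemma~\ref{lem:majorityWithKey} to obtain a (not necessarily symmetric) majority, and then symmetrize it in two stages.

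By Theorem~\ref{thm:barto} combined with Theorem~\ref{thm:splittingCycles} for the primes $p=2$ and $p=3$, the assumptions $\cloS\nleqm\cloC_2$ and $\cloS\nleqm\cloC_3$ are equivalent to $\cloS\models\Sigma_2$ and $\cloS\models\Sigma_3$; hence $\cloS$ contains a binary (commutative) cyclic operation $c_2$ and a ternary cyclic operation $c_3$. Similarly, Theorem~\ref{thm:splittingMalcev} turns $\cloS\nleqm\cloB_2$ into $\cloS\models\Malcev$, and idempotency of $\cloS$ upgrades the resulting quasi Mal'cev to a genuine Mal'cev operation in $\cloS$. Since the only primes at most $|E_3|=3$ are $2$ and $3$, the hypotheses of Lemma~\ref{lem:majorityWithKey} are met and we obtain a majority operation $m\in\cloS$.

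To symmetrize $m$, I would first kill the $3$-cycle of $S_3$ using $c_3$, by setting
\[h(x,y,z)\coloneqq c_3\bigl(m(x,y,z),\,m(y,z,x),\,m(z,x,y)\bigr).\]
Cyclicity of $c_3$ yields $h(y,z,x)=h(x,y,z)$ directly, and the majority property of $m$ together with idempotency gives $h(x,x,y)=c_3(x,x,x)=x$, with the analogous identities on the other two axes; so $h$ is a cyclic majority. I would then kill the remaining transposition using $c_2$, by setting
\[H(x,y,z)\coloneqq c_2\bigl(h(x,y,z),\,h(z,y,x)\bigr).\]
Commutativity of $c_2$ gives $H(z,y,x)=H(x,y,z)$, and the cyclicity of $h$ (applied to each of the two arguments of $c_2$) lets one verify $H(y,z,x)=H(x,y,z)$; since the transposition $(1\,3)$ and the $3$-cycle $(1\,2\,3)$ generate $S_3$, it follows that $H$ satisfies $\operatorname{FS}(3)$. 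Idempotency of $c_2$ together with the majority property of $h$ yields $H(x,x,y)=c_2(x,x)=x$ and analogously on the other two axes, so $H\in\cloS$ is the required symmetric majority.

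The only substantive idea in the argument is the two-step symmetrization, which uses precisely the two non-trivial cyclic identities provided by the hypotheses; the remainder is the standard translation through the splitting theorems together with routine identity checks, so I do not anticipate any serious obstacle.
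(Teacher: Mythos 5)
Your proposal is correct and takes essentially the same approach as the paper: obtain a plain majority via Lemma~\ref{lem:majorityWithKey} and then symmetrize it using $c_2$ and $c_3$. In fact, since your $h$ is cyclic, your $H(x,y,z)=c_2\bigl(h(x,y,z),h(z,y,x)\bigr)$ coincides with the paper's single-formula symmetrization \eqref{eq:majFS}; you have merely split it into two conceptual steps.
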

\begin{proof}
From Theorem \ref{thm:splittingMalcev} it follows that $\cloS$ has a Mal'cev operation, and from Theorem~\ref{thm:splittingCycles} it follows that there exist $c_2,c_3\in\cloC$ such that $c_2\models\Sigma_2$ and $c_3\models\Sigma_3$. Thus, it follows from Lemma~\ref{lem:majorityWithKey} that $\cloS$ has a majority operation $M'$. We define the operation $M$ as follows:
\begin{align}\nonumber\label{eq:majFS}
    M(x,y,z)\coloneqq c_2(&c_3(M'(x,y,z),M'(y,z,x),M'(z,x,y)),
    \\&c_3(M'(x,z,y),M'(z,y,x),M'(y,x,z))).\tag{$\heartsuit$}
\end{align}
It is easy to check that $M$ is a symmetric majority operation.
\end{proof}

\begin{Lem}\label{lem:SHasMinority}
Let $\cloS$ be an idempotent clone over $E_3$ such that $\cloS\ \nleqm\ \cloC_2$, $\cloS\ \nleqm\ \cloC_3$, and $\cloS\ \nleqm\ \cloB_2$. Then $\cloS$ has a symmetric minority operation.
\end{Lem}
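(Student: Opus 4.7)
From the hypotheses I extract a Mal'cev operation $d\in\cloS$ (Theorem~\ref{thm:splittingMalcev}), a binary commutative idempotent operation $c_2\in\cloS$ (Theorem~\ref{thm:splittingCycles} with $p=2$), and a \emph{fully} $\operatorname{FS}(3)$-symmetric majority operation $M\in\cloS$ (Corollary~\ref{cor:SHasMaj}). The plan is to use $d$ and $M$ to build an auxiliary operation that is already a (non-symmetric) minority and is cyclic in its arguments, and then to add the missing transposition symmetry using $c_2$.

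The first step is to set
\[
m_1(x,y,z)\;\coloneqq\;M\bigl(d(x,y,z),\,d(y,z,x),\,d(z,x,y)\bigr).
\]
Whenever two coordinates of the input agree, $\Sigma_{\operatorname{M}}$ forces two of the three inner values to equal $x$, so the outer majority returns $x$; hence $m_1$ already satisfies $m_1(x,y,y)=m_1(y,x,y)=m_1(y,y,x)=x$. Because $M$ is fully symmetric, any cyclic shift of $(x,y,z)$ only permutes the three inner values and hence leaves $m_1$ unchanged, so $m_1(x,y,z)=m_1(y,z,x)=m_1(z,x,y)$. What $m_1$ need \emph{not} satisfy is invariance under transpositions of its arguments.

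To repair this, I would set
\[
m(x,y,z)\;\coloneqq\;c_2\bigl(m_1(x,y,z),\,m_1(y,x,z)\bigr).
\]
Commutativity of $c_2$ gives $m(x,y,z)=m(y,x,z)$ at once. For invariance under the $3$-cycle $(x,y,z)\mapsto(y,z,x)$, I would use the cyclicity of $m_1$ established in the previous step, combined with the observation that $m_1(z,y,x)$ and $m_1(y,x,z)$ depend on the same multiset of Mal'cev values and are therefore equal by the symmetry of $M$; these two facts together yield $m(y,z,x)=m(x,y,z)$. Since $(1\;2)$ and a $3$-cycle generate $S_3$, the operation $m$ is fully $\operatorname{FS}(3)$-symmetric, and its minority identities follow from those of $m_1$ together with the idempotency of $c_2$.

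The construction is entirely formal and I foresee no real obstacle; the one subtlety worth flagging is that it crucially needs the majority $M$ to be symmetric under all of $S_3$ rather than merely cyclic, which is exactly the strengthening provided by Corollary~\ref{cor:SHasMaj} via the explicit definition~\eqref{eq:majFS}.
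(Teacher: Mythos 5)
Your proposal is correct and follows essentially the same strategy as the paper: both proofs take a Mal'cev operation $d$ (via Theorem~\ref{thm:splittingMalcev}) and the symmetric majority $M$ from Corollary~\ref{cor:SHasMaj}, form $M(d(x,y,z),d(y,z,x),d(z,x,y))$, check it is a minority, and then symmetrize with cyclic operations obtained from Theorem~\ref{thm:splittingCycles}. The only difference is in the symmetrization step, where you are slightly leaner: you observe that, since $M$ is fully symmetric, the inner composition is already invariant under cyclic shifts, so a single application of $c_2$ to $m_1(x,y,z)$ and $m_1(y,x,z)$ suffices; the paper instead re-uses the generic symmetrizing expression~\eqref{eq:majFS}, which calls on both $c_2$ and $c_3$ and would also work for a non-cyclic input. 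Both arguments are sound, and yours economizes on one operation by exploiting the symmetry of $M$ up front.
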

\begin{proof}
Let $\cloS$ be as in the hypothesis. It follows from Theorem \ref{thm:splittingMalcev} that $\cloS$ has a Mal'cev operation $d$. Also, from Corollary~\ref{cor:SHasMaj} we know that $\cloS$ has a majority operation $M$. We define $m'_3$ as follows:
\begin{align*}
    m'_3(x,y,z)\coloneqq M(d(x,y,z),d(y,z,x),d(z,x,y)).
\end{align*}
It is easy to check that $m'_3$ is indeed a minority operation: note that, since $d$ is a Mal'cev operation, whenever we identify two variables in $m'_3$ at least two of the values among $d(x,y,z)$, $d(y,z,x)$, and $d(z,x,y)$ are equal to the variable that occurs only once. Hence, applying $M$ we obtain this variable, again. Furthermore, it follows from Theorem \ref{thm:splittingCycles} that $\cloS$ has a binary cyclic operation $c_2$ and a ternary cyclic operation $c_3$. We then define a symmetric minority $m_3$ in the same way we obtained a symmetric majority in Corollary \ref{cor:SHasMaj}: we simply replace every occurrence of $M$ in \eqref{eq:majFS} by $m_3$.
\end{proof}

\begin{Rem}\label{rem:symmetricAndConstant}
Note that the value of a symmetric minority $m_3(x,y,z)$ has to be a constant $c\in\{0,1,2\}$ whenever the three values in the scope of $m_3$ are all distinct, i.e.,
\begin{align*}
    m_3(0,1,2)&=m_3(0,2,1)=m_3(1,0,2)=\\&=m_3(1,2,0)=m_3(2,0,1)=\\&=m_3(2,1,0)= c.
\end{align*}
In this case we also denote the symmetric minority operation by $m_3^c$. We follow the same convention for symmetric majority operations.
\end{Rem}

\subsection{Generalized minority 
operations}\label{sec:oddition}
Recall that a \emph{generalized minority of arity $n$} is an $n$-ary operation satisfying the minor condition $\operatorname{GM}(n)$ from Definition~\ref{def:symmMinorConditions}. If a generalized minority $m_{n}$
is idempotent and only $a_{i}$
occurs an odd number of times in 
the tuple $(a_1,\dots,a_n)$, then 
$m_{n}(a_1,\dots,a_n)=a_i$.
Moreover, if $m_n$ is a generalized minority on $E_3$, then $m_n(a_1,\dots,a_n)$ returns a constant $c\in E_3$ on all the other tuples, that is on the tuples
containing an odd number of each element from~$E_{3}$.

Note that the minority operation $m_3(x,y,z)= x \oplus y \oplus z$ on the set $E_2=\{0,1\}$ is indeed a generalized minority of arity 3. Also note that if a clone $\cloC$ over $\{0,1\}$ contains the minority operation $m_3(x,y,z)$ then, for every $n\geq 2$, the generalized minority
\begin{align*}
    m_{2n+1}(x_1,\dots,x_{2n+1})&\coloneqq m_3(m_{2n-1}(x_1,\dots,x_{2n-1}),x_{2n},x_{2n+1})
    \\&= x_1 \oplus x_2 \oplus\ldots \oplus x_{2n+1}
\end{align*}
is also in $\cloC$. We prove an analogous result for the three-element case: we show that every clone $\cloS$ over $E_3$ satisfying condition~\eqref{eq:submax} has a generalized minority of every odd arity. Recall that Theorem~\ref{lem:SHasMinority} implies that $\cloS$ has a symmetric minority operation $m_3^c$ where $c\in\{0,1,2\}$ is some constant value that $m_3^c(x,y,z)$ returns whenever $|\{x,y,z\}|=3$, see Remark~\ref{rem:symmetricAndConstant}. We denote by $+_3$ the addition of the group of integers modulo $3$ and define the following auxiliary operation
\begin{equation}\tag{$\diamondsuit$}\label{eq:DimaSwitch}
    \small D^c(x,y,z)\coloneqq
    \begin{cases}
    c+_3 1 & \text{ if } (x,y,z)\in\{(c+_3 2,c,c+_3 1),(c+_3 2,c+_3 1,c)\},
    \\c+_3 2 & \text{ if } (x,y,z)\in\{(c+_3 1,c,c+_3 2),(c+_3 1,c+_3 2,c)\},
    \\x & \text{ otherwise.}
    \end{cases}
\end{equation}
Note that $D^c(x,y,z) = m_3^c(m_3^c(x,y,z),y,z)$, hence $D^c(x,y,z)\in\cloS$.

\begin{Thm}\label{thm:GenMinority}
Let $\cloS$ be an idempotent clone over $E_3$ such that $\cloS\ \nleqm\ \cloC_2$, $\cloS\ \nleqm\ \cloC_3$, and $\cloS\ \nleqm\ \cloB_2$. Then $\cloS$ has generalized minorities of arity $k$ for every odd $k\geq 3$. 
\end{Thm}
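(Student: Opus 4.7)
The plan is to proceed by induction on odd $k\ge 3$. The base case $k=3$ is immediate: the symmetric minority $m_3^c$ from Lemma~\ref{lem:SHasMinority} is $\operatorname{FS}(3)$-symmetric by construction, and its minority identities $m_3^c(x,x,z)=z=m_3^c(y,y,z)$ are exactly the pair-replacement identity, so $m_3^c\models\operatorname{GM}(3)$.

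For the inductive step, assume $m_{2n-1}$ is a generalized minority of arity $2n-1$ with the same constant $c$ on tuples in which every element of $E_3$ appears an odd number of times. Mimicking the $\{0,1\}$-template recalled in the paragraph preceding the theorem, I would start from the candidate
\[\tilde m_{2n+1}(x_1,\dots,x_{2n+1})\coloneqq m_3^c\bigl(m_{2n-1}(x_1,\dots,x_{2n-1}),x_{2n},x_{2n+1}\bigr).\]
A case analysis organised by the subset $T\subseteq E_3$ of elements appearing an odd number of times in $(x_1,\dots,x_{2n-1})$ together with the configuration of $(x_{2n},x_{2n+1})$ shows that $\tilde m_{2n+1}$ returns the correct generalized-minority value in every case \emph{except} one: when $|T|=3$ (so the inner call returns $c$) and $\{x_{2n},x_{2n+1}\}=\{c,b\}$ for some $b\ne c$, in which case $\tilde m_{2n+1}$ outputs $b$ while the correct value is the third element $e\in E_3\setminus\{c,b\}$.

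The auxiliary operation $D^c$ from $(\diamondsuit)$ is designed precisely to implement this correction: it acts as the projection onto its first coordinate \emph{except} when its three inputs list all of $E_3$ with the first coordinate distinct from $c$, where it swaps the two non-$c$ values. Inserting $D^c$ appropriately into the composition — feeding its second and third arguments with coordinates of $(x_1,\dots,x_{2n+1})$ that, in the failing pattern, together witness the missing element $e$ (which must appear an odd number of times in $(x_1,\dots,x_{2n-1})$ precisely because $|T|=3$) — rewrites $b$ into $e$ in the failing case while leaving every other case unchanged. Since $D^c=m_3^c(m_3^c(x,y,z),y,z)\in\cloS$ by the paragraph preceding the theorem, the corrected term lies in $\cloS$, and its values depend only on the multiset of $(x_1,\dots,x_{2n+1})$, which delivers $\operatorname{FS}(2n+1)$ and the pair-replacement identity simultaneously, that is $\operatorname{GM}(2n+1)$.

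The main obstacle I expect is choosing the arguments of $D^c$ so that the correction is symmetric in $x_1,\dots,x_{2n+1}$ yet is triggered in exactly the failing parity class and in no other; this is precisely what required the introduction of $D^c$ in the first place. Beyond pinning down the right symmetric selection — likely obtained by a further auxiliary call to $m_{2n-1}$ on a suitable minor of $(x_1,\dots,x_{2n+1})$ — the verification reduces to running through the four odd-sum parity vectors $(1,0,0),(0,1,0),(0,0,1),(1,1,1)$ of $E_3$ against the finitely many relevant configurations of the split coordinates, with symmetrisation à la Corollary~\ref{cor:SHasMaj} (using $c_2$ and $c_3$) available as a safety net if the explicit term turns out not to be fully symmetric on the nose.
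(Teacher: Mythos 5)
Your base case and your diagnosis of where the naive recursion $\tilde m_{2n+1} = m_3^c\bigl(m_{2n-1}(x_1,\dots,x_{2n-1}),x_{2n},x_{2n+1}\bigr)$ breaks down are both correct: it fails precisely when all three elements of $E_3$ occur with odd multiplicity in $(x_1,\dots,x_{2n-1})$ and $\{x_{2n},x_{2n+1}\}=\{c,b\}$ with $b\ne c$, in which case $\tilde m_{2n+1}$ returns $b$ instead of the third element $e$. You also correctly recognise that $D^c$ is tailored to swap $b$ and $e$. However, the proof stalls exactly at the point you flag yourself: you never produce the term. To patch $\tilde m_{2n+1}$ you would need to feed $D^c$ second and third arguments that evaluate to $\{c,e\}$ (in some order) in the failing configuration while being inert everywhere else, and there is no single coordinate of $(x_1,\dots,x_{2n+1})$ guaranteed to equal $e$; producing $e$ is morally the same problem you set out to solve. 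Your fallback of ``symmetrise afterwards \`a la Corollary~\ref{cor:SHasMaj}'' also does not hold: that corollary symmetrises a \emph{ternary} operation using $c_2$ and $c_3$, and there is no indicated way to symmetrise at arity $2n+1$ while preserving the pair-replacement identity $\operatorname{GM}(2n+1)$ you have just fought to arrange.

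The paper's proof sidesteps computing $e$ entirely by building the symmetry in rather than repairing it afterwards. It defines $t_{2n+1}$ whose inner call is $m_{2n-1}^c(x_1,x_4,\dots,x_{2n+1})$ (skipping $x_2,x_3$), wraps that in three $D^c$'s whose third arguments run over $x_1,x_2,x_3$, collapses them with an outer $m_3^c$, and then symmetrises $t_{2n+1}$ itself over $x_1,x_2,x_3$ to get $m_{2n+1}^c$. This gives invariance under permutations of $\{x_1,x_2,x_3\}$ and of $\{x_4,\dots,x_{2n+1}\}$ by construction; the identity $m_{2n+1}^c(x_1,\dots,x_{2n-1},x,x)=m_{2n-1}^c(x_1,\dots,x_{2n-1})$ together with a pigeonhole over the last $2n-2\ge 4$ coordinates then handles every $n\ge 3$, and the one remaining case $m_5^c$ is checked by an explicit case analysis. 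So you have identified the correct failure mode and the correct repair tool, but the actual term construction and the mechanism that makes the induction close --- which is the bulk of the proof --- is missing and cannot be recovered from the ingredients listed.
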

\begin{proof}
From Lemma~\ref{lem:SHasMinority} we know that $\cloS$ has a symmetric minority operation $m_3^c$; let $D^c$ the operation defined as in \eqref{eq:DimaSwitch}. For every $n\geq 2$, we define the operation
\begin{align*}
    m_{2n+1}^c(x_1,\dots,x_{2n+1}) \coloneqq m_3^c(&t_{2n+1}(x_1,x_2,x_3,x_4,\dots,x_{2n+1}),
        \\&t_{2n+1}(x_2,x_1,x_3,x_4,\dots,x_{2n+1}),
        \\&t_{2n+1}(x_3,x_1,x_2,x_4,\dots,x_{2n+1}))
\end{align*}
where
\begin{align*}
    t_{2n+1}(x_1,\dots,x_{2n+1})\coloneqq m_3^c(&D^c(m_{2n-1}^c(x_1,x_4,x_5,\dots,x_{2n+1}),x_1,x_1),
    \\&D^c(m_{2n-1}^c(x_1,x_4,x_5,\dots,x_{2n+1}),x_1,x_2),
    \\&D^c(m_{2n-1}^c(x_1,x_4,x_5,\dots,x_{2n+1}),x_1,x_3)).
\end{align*}
Note that the first argument of $m_3^c$ in the latter formula is always equal to \[m_{2n-1}^c(x_1,x_4,x_5,\dots,x_{2n+1})\]
however, for the sake of symmetry, we instead prefer to write \[D^c(m_{2n-1}^c(x_1,x_4,x_5,\dots,x_{2n+1}),x_1,x_1)\] in the definition. 

We are going to prove the claim of the theorem by induction over $n$. Let us first make a few remarks on the symmetries of $m^c_{2n+1}$ in order to make the formula more digestible for the reader. As an inductive hypothesis we assume that $m^{c}_{2n-1}$ is a generalized minority. 
It follows from the symmetry of $m^{c}_{2n-1}$ that 
$t_{2n+1}$ is invariant under any permutation of the variables 
$x_4,\dots,x_{2n+1}$.
Hence $m_{2n+1}^{c}$ is also invariant under any permutation of the variables 
$x_4,\dots,x_{2n+1}$.
Since $m_3^c$ is symmetric,
$t_{2n+1}$ is invariant under permutation of $x_2$ and $x_3$ and therefore 
$m_{2n+1}^{c}$ is invariant under any permutation of the variables 
$x_1$, $x_2$, and $x_3$. Notice that $m_1^c(x):=x$. Moreover, since $m_{2n-1}^c$ is a generalized minority, it holds \[m_{2n-1}^c(x_1,x_2,\dots,x_{2n-3},x,x) = m_{2n-3}^c(x_1,x_2,\dots,x_{2n-3}),\]
thus, we obtain that \[t_{2n+1}(x_1,x_2,\dots,x_{2n-1},x,x) = t_{2n-1}(x_1,x_2,\dots,x_{2n-1}),\]
and therefore\
$m_{2n+1}^c(x_1,x_2,\dots,x_{2n-1},x,x) = m_{2n-1}^c(x_1,x_2,\dots,x_{2n-1})$.

Combining this with the symmetry of $m_{2n+1}^c$ over permutation of the
last $2n-2$ coordinates,
we obtain that 
$m_{2n+1}^{c}$ behaves as a generalized minority for all the tuples 
having repetitive elements in 
$x_{4},\dots,x_{2n+1}$.
Thus, if $n\ge 3$ then $2n+1-3>3$ and 
$m_{2n+1}^{c}$ is a generalized minority.

Let us check how the identification of variables 
transforms $m_{5}^{c}$.
\begin{align*}\label{eq:odditionOne}
    m_{5}^c(x_1,x,x,x_4,x_5) = m_3^c\big(&t_{5}(x_1,x,x,x_4,x_5),\tag{$\star_1$}
        \\&t_{5}(x,x_1,x,x_4,x_5),
        \\&t_{5}(x,x_1,x,x_4,x_5)\big)
        \\ =\;\;t_{5}(&x_1,x,x,x_4,x_5)
     \\=m_3^c(&D^c(m_{3}^c(x_1,x_4,x_5),x_1,x_1),
    \\&D^c(m_{3}^c(x_1,x_4,x_5),x_1,x),
    \\&D^c(m_{3}^c(x_1,x_4,x_5),x_1,x))
    \\=D^c(&m_{3}^c(x_1,x_4,x_5),x_1,x_1)
    \\= m_{3}^c(&x_1,x_4,x_5)
\end{align*}
This proves that
$m_{5}^{c}$ behaves well on all the tuples having repetitive elements in 
the first 3 coordinates. It only remains to consider
the case when there are no repeated elements in the first three components
and no repeated elements in the last two components of $m^c_5$. By making use of
the known symmetries, it suffices to verify that $m^c_5(x_1, x_2 , x_3 , x_1 , x_2 ) = x_3$.
\begin{align*}
    t_{5}(x_1,x_2,x_3,x_1,x_2)= 
    m_3^c\big(&D^c(m_{3}^c(x_1,x_1,x_2),x_1,x_1),
    \\&D^c(m_{3}^c(x_1,x_1,x_2),x_1,x_2),
    \\&D^c(m_{3}^c(x_1,x_1,x_2),x_1,x_3)\big)
    \\ = m_3^c(&x_2,x_2,D^c(x_2,x_1,x_3))=D^c(x_2,x_1,x_3);\\
    t_{5}(x_1,x_2,x_3,x_2,x_3)= 
    m_3^c(&D^c(m_{3}^c(x_1,x_2,x_3),x_1,x_1),
    \\&D^c(m_{3}^c(x_1,x_2,x_3),x_1,x_2),
    \\&D^c(m_{3}^c(x_1,x_2,x_3),x_1,x_3))=m_3^c(x_1,x_2,x_3).
\end{align*}
We check the last equality as follows.
If $|\{x,y,z\}|<3$, then $D^{c}(x,y,z) = x$. Hence if $|\{x_1,x_2,x_3\}|<3$, then $D^c(m_{3}^c(x_1,x_2,x_3),x_1,x_3) = m_{3}^c(x_1,x_2,x_3)$.
If $|\{x_1,x_2,x_3\}|=3$, then 
$m_{3}^c(x_1,x_2,x_3)=c$ and, since $D^{c}$ returns $c$ whenever the first coordinate is $c$, we get the equality.

Finally, we obtain the following equation
\begin{align*}\label{eq:odditionThree}
    m_{5}^c(x_1,x_2,x_3,x_1,x_2) 
    = m_3^c\big(&t_{5}(x_1,x_2,x_3,x_1,x_2),\tag{$\star_2$}
        \\&t_{5}(x_2,x_1,x_3,x_1,x_2),
        \\&t_{5}(x_3,x_1,x_2,x_1,x_2)\big)
    \\= m_3^c\big(&D^{c}(x_2,x_1,x_3),
        \\&D^{c}(x_1,x_2,x_3),
        \\&m_{3}^{c}(x_1,x_2,x_3)\big)= x_3.
\end{align*}

The equation \eqref{eq:odditionThree} above can be checked manually.
If $\{x_1,x_2,x_3\}\neq\{0,1,2\}$, then it again follows from 
the fact that $m_{3}^{c}$ is the minority and $D^{c}$ is the first projection on every 2-element subset.

If $\{x_1,x_2,x_3\}=\{0,1,2\}$ and $x_{3} = c$, then 
\[m_3^c(D^{c}(x_2,x_1,x_3),D^{c}(x_1,x_2,x_3),m_{3}^{c}(x_1,x_2,x_3))=
m_3^c(x_1,x_2,c)=c.\]

If $\{x_1,x_2,x_3\}=\{0,1,2\}$ and $x_{1} = c$, then 
\[m_3^c(D^{c}(x_2,x_1,x_3),D^{c}(x_1,x_2,x_3),m_{3}^{c}(x_1,x_2,x_3))=
m_3^c(x_3,c,c)=x_3.\]
Similarly, if $\{x_1,x_2,x_3\}=\{0,1,2\}$ and $x_{2} = c$, then 
\[m_3^c(D^{c}(x_2,x_1,x_3),D^{c}(x_1,x_2,x_3),m_{3}^{c}(x_1,x_2,x_3))=
m_3^c(c,x_3,c)=x_3.\]
The equations \eqref{eq:odditionOne} and \eqref{eq:odditionThree} imply that 
$m_{5}^{c}$ is a generalized minority.
\end{proof}

\subsection{Totally symmetric operations of every arity}

Here we prove that every idempotent clone $\cloS$ over $E_3$ satisfying the condition~\eqref{eq:submax} has totally symmetric operations of every arity $n\geq 2$ (see Definition~\ref{def:symmMinorConditions}).  
\begin{Thm}\label{thm:TSn}
Let $\cloS$ be an idempotent clone over $E_3$ such that $\cloS\ \nleqm\ \cloC_2$, $\cloS\ \nleqm\ \cloC_3$, and $\cloS\ \nleqm\ \cloB_2$. Then $\cloS$ has a totally symmetric operation $s_n$ of arity $n$, for every $n\geq 2$.
\end{Thm}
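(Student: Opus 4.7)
The plan is to prove the statement by strong induction on $n$, using the operations assembled in the earlier subsections. The cases $n = 2$ and $n = 3$ admit direct constructions, and the inductive step combines $s_{n-1}$ with the other symmetric operations available in $\cloS$.

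For $n = 2$ I would set $s_2 := c_2$, the binary cyclic operation given by Theorem~\ref{thm:splittingCycles} applied with $p = 2$; since a symmetric binary operation is automatically totally symmetric, this suffices. For $n = 3$ I would define
\[s_3(x,y,z) := c_2\bigl(M(x,y,z),\, m_3^c(x,y,z)\bigr),\]
where $M$ is the symmetric majority from Corollary~\ref{cor:SHasMaj} and $m_3^c$ is the symmetric minority from Lemma~\ref{lem:SHasMinority}. Symmetry of $s_3$ is inherited from its constituents, and a direct case analysis gives $s_3(x,x,y) = c_2(x,y) = s_3(x,y,y)$; moreover, when $\{x,y,z\} = \{0,1,2\}$ both $M(x,y,z)$ and $m_3^c(x,y,z)$ take constant values (by Remark~\ref{rem:symmetricAndConstant} and the symmetry of $M$), so $s_3(x,y,z)$ is also constant on such triples. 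This establishes that $s_3$ is totally symmetric.

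For the inductive step $n \geq 4$, assuming a totally symmetric $s_{n-1}$ is at hand, I would build $s_n$ by combining $s_{n-1}$ evaluated on $(n-1)$-subtuples of $(x_1,\ldots,x_n)$ with a symmetric ``collapsing'' operation made from $s_3$, $c_2$, $c_3$, and the generalized minorities supplied by Theorem~\ref{thm:GenMinority}. The guiding observation is that total symmetry of $s_{n-1}$ forces its value on the subtuple obtained by deleting $x_i$ to depend only on the set $\{x_j : j \neq i\}$; the task then reduces to aggregating these $n$ set-values into a single value depending only on $\{x_1,\ldots,x_n\}$. A nested construction in the spirit of the formula for $m_{2n+1}^c$ in Theorem~\ref{thm:GenMinority}---using $s_3$ to collapse triples of values to a canonical one and $c_2$ to merge pairs---is the natural candidate. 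Verification reduces to a finite case analysis, since on $E_3$ the underlying set of the inputs has cardinality $1$, $2$, or $3$.

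The main obstacle I expect is enforcing full $S_n$-symmetry and total symmetry simultaneously in the inductive step. A naive symmetric average of $s_{n-1}$ over $(n-1)$-subtuples fails, because distinct multisets on $\{x_1,\ldots,x_n\}$ with the same underlying set can produce distinct multisets of $s_{n-1}$-values, so a symmetric combinator in general returns different outputs on them. The combinator must therefore be chosen so as to exploit that $s_3$ already compresses every $2$- or $3$-element subset of $E_3$ to a canonical value, so that the surplus multiplicity information is discarded before aggregation. Should the direct recursion become unwieldy, an alternative is to argue by contradiction via invariant relations: assume no totally symmetric operation of arity $n$ exists, locate an obstructing essential relation of arity $n$ in $\Inv(\cloS)$, and derive a contradiction from the structure theorem for critical relations preserved by a Mal'cev operation (Theorem~\ref{thm:DimaKeyCritical}) combined with the cyclic and generalized minority operations in $\cloS$.
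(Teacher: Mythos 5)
Your base cases are fine: $s_2 := c_2$ is idempotent and symmetric, hence totally symmetric, and your $s_3(x,y,z) := c_2(M(x,y,z), m_3^c(x,y,z))$ works because both $M$ and $m_3^c$ are constant on all-distinct triples (Remark~\ref{rem:symmetricAndConstant}), so $s_3$ depends only on the underlying set. The problem is the inductive step for $n\geq 4$: you do not give a construction. You describe a family of candidates (aggregate $s_{n-1}$ over $(n-1)$-subtuples, combine with $s_3$, $c_2$, $c_3$, generalized minorities), correctly identify the central obstacle (distinct multisets with the same underlying set feed distinct multisets into the aggregator), and then stop without producing a combinator that actually overcomes it. Saying that some nested construction ``is the natural candidate'' and that verification ``reduces to a finite case analysis'' is a plan, not a proof. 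Similarly, the suggested fallback via Theorem~\ref{thm:DimaKeyCritical} is only gestured at: that theorem is phrased for critical relations preserved by a Mal'cev operation and was used in the paper to get a \emph{majority} operation (Lemma~\ref{lem:majorityWithKey}); it is not at all clear how to turn an obstruction to an arity-$n$ totally symmetric operation into an arity-$n$ critical relation to which that theorem applies, and you do not attempt it.

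For comparison, the paper resolves the obstacle you flagged with a specific recursion that avoids ``aggregating over deleted coordinates'' altogether. It sets
\[
s_n(x_1,\dots,x_n) := m\bigl(s_{n-1}(x_1,M^c(x_1,x_2,x_3),x_4,\dots,x_n),\ s_{n-1}(x_2,\dots),\ s_{n-1}(x_3,\dots)\bigr),
\]
where $m$ is the symmetric minority and $M^c$ the symmetric majority, and then proves by induction the two invariants that $s_n$ returns $s_2(a,b)$ on tuples with underlying set $\{a,b\}$ and $m(s_2(0,c),s_2(1,c),s_2(2,c))$ on surjective tuples. The key trick is inserting $M^c(x_1,x_2,x_3)$ into the \emph{second slot} of $s_{n-1}$ and wrapping the three rotated versions in $m$; this makes the value depend only on the underlying set without any subtuple enumeration. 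Your proposal does not contain this idea or any substitute for it, so the inductive step remains a genuine gap.
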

\begin{proof}
From Corollary~\ref{cor:SHasMaj} and Lemma~\ref{lem:SHasMinority} it follows that $\cloS$ has a symmetric majority operation $M^c$ and a symmetric minority operation $m$, respectively. Also, from Theorem~\ref{thm:splittingCycles} it follows that there exists a binary cyclic operation $s_2\in\cloS$, thus $\cloS\models\operatorname{TS}(2)$. For every $n\geq 3$ we define:
\begin{align*}
    s_n(x_1,\dots,x_n) \coloneqq m(&s_{n-1}(x_1,M^c(x_1,x_2,x_3),x_4,\dots,x_n),
    \\&s_{n-1}(x_2,M^c(x_1,x_2,x_3),x_4,\dots,x_n),
    \\&s_{n-1}(x_3,M^c(x_1,x_2,x_3),x_4,\dots,x_n)).
\end{align*}
We will prove by induction on $n\ge 2$ that 
\begin{enumerate}[(i)]
    \item if $\{x_1,\dots,x_n\}=\{a,b\}\subset\{0,1,2\}$, then $s_n(x_1,\dots,x_n)=s_2(a,b)$;
    \item if $\{x_1,\dots,x_n\}=\{0,1,2\}$, then \[s_n(x_1,\dots,x_n)= 
    m(s_2(0,c),s_2(1,c),s_2(2,c)).\]
\end{enumerate}
For $n=2$ this is obvious. Notice that,
for every $n\ge 3$,
\begin{align*}
    s_n(x,x,x_3,x_4,\dots,x_n) \coloneqq m(&s_{n-1}(x,M^c(x,x,x_3),x_4,\dots,x_n),
    \\&s_{n-1}(x,M^c(x,x,x_3),x_4,\dots,x_n),
    \\&s_{n-1}(x_3,M^c(x,x,x_3),x_4,\dots,x_n))\\
    &\;\;\;=s_{n-1}(x_3,x,x_4,\dots,x_n)
\end{align*}
Hence, by the inductive assumption we have the required properties (i) and (ii) 
on all tuples whose first two elements are equal.
Since the operations $M^{c}$ and $m$
are symmetric, $s_{n}$ is symmetric under any permutation 
of the first 3 variables.
Therefore, the 
property (i) always holds and the property (ii)
holds on all tuples such that the first three elements are not different.

Let us prove the property (ii) on all tuples 
$(x_1,x_2,\dots,x_n)$ such that 
$\{x_1,x_2,x_3\} = \{0,1,2\}$.
For $s_3$ it immediately follows from the definition. 
To prove this for $n>3$
consider 3 cases.

Case 1. If $\{x_4,\dots,x_n\}=\{a\}\subset\{0,1,2\}$ then
\begin{align}\label{eq:TStwo}
    s_n(x_1,\dots,x_n) = 
    m(&s_{n-1}(x_1,c,a,\dots,a),\notag
    \\&s_{n-1}(x_2,c,a,\dots,a),\notag
    \\&s_{n-1}(x_3,c,a,\dots,a))\notag
    \\\stackrel{\star}{=} m(&s_{n-1}(0,c,a,\dots,a),\tag{$\bullet_1$}
    \\&s_{n-1}(1,c,a,\dots,a),\notag
    \\&s_{n-1}(2,c,a,\dots,a)).\notag
\end{align}
The equality $\stackrel{\star}{=}$ holds because $m$ is symmetric. In case $a=c$, we obtain, by the induction hypothesis, that \[s_n(x_1,\dots,x_n)=m(s_2(0,c),s_2(1,c),s_2(2,c)).\] 
If $a\neq c$, then in \eqref{eq:TStwo} we have an argument of the form $s_{n-1}(a,c,a,\dots,a)$, one of the form $s_{n-1}(c,c,a,\dots,a)$, and one where 0,1, and 2 occur. Therefore, by property (i) of the induction hypothesis we get $s_{n-1}(a,c,a,\dots,a)=s_2(a,c)$ and $s_{n-1}(c,c,a,\dots,a)=s_2(a,c)$. Moreover, by properties of $m$, we get \[s_n(x_1,\dots,x_n)=m(s_2(a,c),s_2(a,c),s_3(0,1,2))=s_3(0,1,2).\]  

Case 2. If $\{x_4,\dots,x_n\}=\{a,b\}\subset\{0,1,2\}$ then, by using the fact that $s_{n-1}$ and $m$ are symmetric, we get
\begin{align}\label{eq:TSthree}
    s_n(x_1,\dots,x_n) = m(&s_{n-1}(0,c,a,\dots,a,b,\dots,b), \tag{$\bullet_2$}
    \\&s_{n-1}(1,c,a,\dots,a,b,\dots,b),\notag
    \\&s_{n-1}(2,c,a,\dots,a,b,\dots,b)).\notag
\end{align}
If $c\notin\{a,b\}$ then each argument of $m$ in the latter formula is equal to $s_3(0,1,2)$, by the induction hypothesis. Otherwise, if $c\in\{a,b\}$ then in \eqref{eq:TSthree} we have an argument of the form $s_{n-1}(a,\dots,a,b,\dots,b)$, one of the form $s_{n-1}(b,a,\dots,a,b\dots,b)$, and one where 0, 1, and 2 occur. By the induction hypothesis, we get
\[s_n(x_1,\dots,x_n)=m(s_2(a,b),s_2(a,b),s_3(0,1,2)).\]

Case 3. If $\{x_4,\dots,x_n\}=\{0,1,2\}$, then, by the symmetry of $m$ and $s_{n-1}$, we have
\begin{align*}\label{eq:TSsurj}
    s_n(\sigma(x_1),\dots,\sigma(x_n)) = m(&s_{n-1}(0,c,0,\dots,0,1,\dots,1,2\dots,2), \tag{$\bullet_3$}
    \\&s_{n-1}(1,c,0,\dots,0,1,\dots,1,2\dots,2),\notag
    \\&s_{n-1}(2,c,0,\dots,0,1,\dots,1,2\dots,2))\notag
\end{align*}
It follows, by the the induction hypothesis, that each argument of $m$ in \eqref{eq:TSsurj} is equal to $s_3(0,1,2)$; hence, we obtain $s_n(x_1,\dots,x_n)=s_3(0,1,2)$.
This concludes the proof.
\end{proof}

\subsection{The main result}
In Section~\ref{sec:coatom} we proved that $\overline{\cloI_2}$ is the unique coatom in $\Pthree$. 
Here we prove that, whenever a clone has totally symmetric operations and generalized minorities of an arbitrary large arity,
there exists a minor homomorphism from $\mathcal I_2$ to this clone. Combining this with the results of the previous sections
we derive the main result of our paper: $\overline{\cloC_2}$, $\overline{\cloC_3}$, and $\overline{\cloB_2}$ are the only submaximal elements in $\Pthree$.

It is well known that every operation 
over $\{0,1\}$ has a unique polynomial representation if we forbid 
repetitive monomials and disrespect the order of monomials. 
Applying this fact to idempotent operations from $\cloI_2$
we obtain the following lemma,
in which 
operations $\oplus$ and $\wedge$ denote the usual sum and multiplication modulo 2, respectively.

\begin{Lem}\label{Lem:polynomialRepresentation}
For every operation $f\in \cloI_2$ there exists an up to the order of monomials unique
representation of the form $f(x_1,\dots,x_n)\coloneqq\bigoplus_{i=1}^\ell \bigwedge W_i$,
where $\ell$ is odd and the sets $W_1,\dots,W_l\subseteq\{x_1,\dots,x_n\}$ are different and nonempty.
\end{Lem}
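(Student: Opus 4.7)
The plan is to deduce the statement from the classical fact that every Boolean function admits a unique Zhegalkin polynomial (algebraic normal form) representation, and then to read off the extra conditions (nonempty $W_i$ and odd $\ell$) from idempotence.

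\emph{Step 1 (general ANF).} I would first recall that every $f\colon\{0,1\}^n\to\{0,1\}$ can be written uniquely as
\[
f(x_1,\dots,x_n)=c_\emptyset\oplus\bigoplus_{\emptyset\neq S\subseteq\{1,\dots,n\}} c_S\bigwedge_{i\in S}x_i,
\]
with coefficients $c_S\in\{0,1\}$. Existence follows by Möbius inversion: setting $c_S:=\bigoplus_{T\subseteq S}f(\mathbf{1}_T)$ (where $\mathbf{1}_T$ denotes the indicator tuple of $T$) and expanding $\bigoplus_{S\subseteq\operatorname{supp}(x)}c_S$, one recovers $f(x)$. Uniqueness is then forced by a counting argument: there are exactly $2^{2^n}$ Boolean functions of $n$ variables and exactly $2^{2^n}$ choices of a subset of the $2^n$ available monomials $\bigwedge_{i\in S}x_i$ (including the empty monomial, interpreted as $1$), so the evaluation map from polynomial expressions to functions must be a bijection.

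\emph{Step 2 (idempotence kills the constant term).} Since $f\in\cloI_2$, one has $f(0,\dots,0)=0$. Evaluating the ANF at $(0,\dots,0)$, every nonempty monomial vanishes and we are left with $c_\emptyset$; hence $c_\emptyset=0$. Therefore the representation has the form $\bigoplus_{i=1}^\ell\bigwedge W_i$ with each $W_i$ nonempty and pairwise distinct.

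\emph{Step 3 (parity of $\ell$).} Again by idempotence, $f(1,\dots,1)=1$. But under this evaluation every conjunction $\bigwedge W_i$ equals $1$, so $f(1,\dots,1)=\bigoplus_{i=1}^\ell 1=\ell\bmod 2$. Thus $\ell$ is odd. Uniqueness of the representation up to the order of monomials is inherited from Step~1.

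There is no substantial obstacle here; the only piece needing genuine argument is the uniqueness in Step~1, and that reduces to the counting/dimension observation above (both sides have size $2^{2^n}$). Steps~2 and~3 are direct one-line substitutions using $f(0,\dots,0)=0$ and $f(1,\dots,1)=1$.
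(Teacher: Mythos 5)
Your argument is correct and takes essentially the same route as the paper: the authors also invoke the classical unique Zhegalkin/ANF representation (stated in the sentence preceding the lemma) and then read off the two extra constraints from idempotence, observing that a polynomial preserving $\{0\}$ cannot contain the constant monomial and a polynomial preserving $\{1\}$ must have an odd number of monomials, which match your Steps 2 and 3. You simply spell out the existence and uniqueness of the ANF (Möbius inversion plus the counting bijection), which the paper takes as known.
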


\begin{proof}
It is sufficient to check that every polynomial preserving 
$\{0\}$ does not have the constant 1 as a monomial, 
and that every polynomial preserving $\{1\}$ has an odd number of monomials.
\end{proof}

\begin{Thm}\label{thm:new}
Let $\cloS$ be a clone over $E_k$, for some $k\geq 2$, such that 
\begin{itemize}
    \item $\cloS\models\operatorname{TS}(n)$, for every $n\geq 2$, and
    \item $\cloS\models\GM(n)$, for every odd $n\ge 3$.
\end{itemize}
Then there exists a minor homomorphism from $\cloI_2$ to $\cloS$.
\end{Thm}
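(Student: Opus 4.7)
The plan is to construct an explicit minor homomorphism $\xi\colon\cloI_2\to\cloS$ via the canonical polynomial representation provided by Lemma~\ref{Lem:polynomialRepresentation}. Given $f\in\cloI_2$ of arity $n$ written as $f(x_1,\dots,x_n)=\bigoplus_{i=1}^{\ell}\bigwedge W_i$ with $\ell$ odd and the $W_i$ distinct nonempty, I would define
\[\xi(f)(x_1,\dots,x_n)\coloneqq m_\ell\bigl(s_{|W_1|}(W_1),\dots,s_{|W_\ell|}(W_\ell)\bigr),\]
where $s_k\in\cloS$ is a TS operation of arity $k$ and $m_j\in\cloS$ a GM operation of arity $j$, and $s_k(W)$ abbreviates $s_k$ evaluated at any listing of $W$ (independent of the listing by $\operatorname{TS}(k)$); for $\ell=1$ one simply sets $\xi(f)\coloneqq s_{|W_1|}(W_1)$. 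In spirit the definition replaces $\bigwedge$ by TS and $\bigoplus$ by GM, and the entire burden of proof is then to verify $\xi(f_\sigma)=\xi(f)_\sigma$ for every $\sigma\colon E_n\to E_r$.

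Before the verification I would establish two preparatory facts. The first is the structural observation that $\GM(n)$, together with the $\operatorname{FS}(n)$ it implies, forces $m_n(a_1,\dots,a_n)$ to depend only on the set $A_{\mathrm{odd}}$ of values occurring an odd number of times among $a_1,\dots,a_n$: $\operatorname{FS}$ lets us permute arguments, and $\GM$ lets us replace any pair of equal arguments by any other equal pair, so every even-multiplicity block can be collapsed. The second, more delicate step is to produce \emph{coherent} families $(s_n)_{n\ge 1}$ and $(m_n)_{n\ge 3,\,n\text{ odd}}$ satisfying
\[s_n(x_1,\dots,x_{n-1},x_{n-1})=s_{n-1}(x_1,\dots,x_{n-1})\quad\text{and}\quad m_n(x_1,\dots,x_{n-2},y,y)=m_{n-2}(x_1,\dots,x_{n-2}).\]
Starting from any $s_N$, iteratively identifying the last two variables yields a compatible chain $s_N,s_{N-1},\dots,s_1$ of TS operations; analogously, collapsing a pair of arguments in $m_N$ using $\GM$ yields a compatible chain of GM operations. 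Because $\cloS$ has only finitely many operations of each fixed arity, a diagonal pigeonhole extraction as $N\to\infty$ produces infinite coherent families. Consequently, $\alpha(U)\coloneqq s_{|U|}(U)$ is well-defined and coincides with $s_k$ applied to any $k$-sequence with underlying set $U$ whenever $k\ge|U|$, and $m_n(a_1,\dots,a_n)$ depends only on $A_{\mathrm{odd}}$ in an arity-independent manner.

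With these tools in hand, checking $\xi(f_\sigma)=\xi(f)_\sigma$ becomes a bookkeeping exercise. Rewriting $f_\sigma$ via idempotence of $\wedge$ (to collapse repeated variables inside each monomial) and $a\oplus a=0$ (to cancel identical monomials in pairs) gives $f_\sigma=\bigoplus_{W\in\tilde{\mathcal W}}\bigwedge W$, where $\tilde{\mathcal W}$ collects those $W\subseteq\{y_1,\dots,y_r\}$ that appear an odd number of times in the list $(\sigma(W_1),\dots,\sigma(W_\ell))$, and $|\tilde{\mathcal W}|$ is odd. Hence $\xi(f_\sigma)=m_{|\tilde{\mathcal W}|}\bigl(\alpha(W)\colon W\in\tilde{\mathcal W}\bigr)$, while coherence of the $s_k$'s rewrites $\xi(f)_\sigma$ as $m_\ell\bigl(\alpha(\sigma(W_1)),\dots,\alpha(\sigma(W_\ell))\bigr)$. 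A direct parity count shows that in both expressions the odd-multiplicity set of arguments is $\{\alpha(W)\colon W\in\tilde{\mathcal W}\}$, so the arity-independence of the $m_n$'s forces the two values to agree. The main obstacle will be the simultaneous coherent construction of the two families together with the parity-based reduction for GM; once these are in place, the remainder of the verification is routine.
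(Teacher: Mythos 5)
Your proposal is correct and follows essentially the same route as the paper: define $\xi$ on the canonical polynomial representation from Lemma~\ref{Lem:polynomialRepresentation} by replacing $\wedge$ with a TS operation and $\oplus$ with a GM operation, extract coherent families of TS and GM operations by a compactness argument (your ``diagonal pigeonhole extraction'' is the K\"onig's lemma step in the paper), and verify minor-preservation using the facts that $\oplus$-cancellation matches the parity-sensitivity of $m_\ell$ and that the coherent $s_k$'s make the value depend only on the underlying set. The paper's verification makes the reduction to odd-multiplicity classes slightly more explicit by writing out the subsets $U_1,\dots,U_d$ and comparing $m_\ell\big(s_{|W^\pi_1|}(W^\pi_1),\dots\big)$ with $m_d\big(s_{|U_1|}(U_1),\dots,s_{|U_d|}(U_d)\big)$, but this is the same parity count you describe.
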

\begin{proof}
Let $f$ be any operation in $\cloI_2$.
Notice that the identification of two variables of a totally symmetric operation of arity $n$ gives a totally symmetric operation of a smaller arity. 
Similarly, the identification of three variables of 
a generalized minority gives a generalized minority of a smaller arity.
Then by K\"onig's lemma there exist an infinite sequence of totally symmetric operations $s_{2},s_{3},s_{4},\dots,$ and an infinite sequence of generalized minorities $m_{3},m_5,m_7,\dots,$ such that $s_{n}$ and $m_n$ are of arity $n$ for every $n$, and they are compatible in the following sense.
The identification of two variables of $s_{n}$ gives $s_{n-1}$
and the identification of three variables of $m_{2k+1}$ gives
$m_{2k-1}$.

By Lemma~\ref{Lem:polynomialRepresentation} 
there exists an up to permutation of monomials unique representation $f(x_1,\dots,x_k)=\bigoplus_{i=1}^\ell \bigwedge W_i$, where $\ell$ is odd and the sets $W_1,\dots,W_l\subseteq\{x_1,\dots,x_n\}$ are different. Notice that, for every $i\geq 2$, the operation $s_i$ only depends on the set of variables occurring in it, i.e., the order of the variables and their multiplicity can be ignored. Thus, we write $s_{|W_i|}(W_i)$ to stress this fact; moreover, we set $s_1(\{x\})\coloneqq x$, for every $x\in\{x_1,\dots,x_k\}$. We define the map $\xi\colon \cloI_2\to\cloS$ as follows
\[\xi\colon \Big(\bigoplus_{i=1}^\ell \bigwedge W_i\Big)\mapsto m_\ell\big(s_{|W_1|}(W_1),\dots,s_{|W_\ell|}(W_\ell)\big).\]
Since $m_{\ell}$ is symmetric, the map $\xi$ is well defined. 

Note that both the operation $\oplus$ and $m_\ell$ only depend on the parity of the elements occurring among their arguments.

Let $\pi\colon\{1,\dots,k\}\to\{1,\dots,r\}$ be a map. By first applying the map $\xi$ we obtain $m_\ell\big(s_{|W_1|}(W_1),\dots,s_{|W_\ell|}(W_\ell)\big)$ and then, via $\pi$, we obtain 
\begin{equation*}
    m_\ell\big(s_{|W^\pi_1|}(W^\pi_1),\dots,s_{|W^\pi_{\ell}|}(W^\pi_{\ell})\big), \mbox{ where } W^\pi_{i}\coloneqq\{x_{\pi(j)}\mid x_j\in W_i\}.
\end{equation*}
Let 
$\{U_1,\dots,U_{t}\}$ be the set of all different subsets in 
$\{W^\pi_1,\dots,W^\pi_\ell\}$.
Without loss of generality we assume that 
$U_{i}$ appears an odd number of times in $W^\pi_1,\dots,W^\pi_\ell$
for $i\in\{1,2\dots,d\}$ and 
$U_{i}$ appears an even number of times in $W^\pi_1,\dots,W^\pi_\ell$
for $i\in\{d+1,d+2\dots,t\}$.
Then using properties of $m_{\ell}$ we have 
\[m_\ell\big(s_{|W^\pi_1|}(W^\pi_1),\dots,s_{|W^\pi_{\ell}|}(W^\pi_{\ell})\big)
= 
m_d\big(s_{|U_1|}(U_1),\dots,s_{|U_{d}|}(U_{d})\big).
\]

On the other side, if we first apply $\pi$, we get $\bigoplus_{i=1}^\ell \bigwedge W^\pi_{i} = \bigoplus_{i=1}^d \bigwedge U_{i}$.
Since all the monomials in
$\bigoplus_{i=1}^d \bigwedge U_{i}$ are different, $\xi$ applied to it gives us 
\[m_d\big(s_{|U_1|}(U_1),\dots,s_{|U_{d}|}(U_{d})\big),\] which is exactly what we need.
\end{proof}

\begin{Cor}\label{thm:main}
Let $\cloS$ be an idempotent clone over $E_3$ such that $\cloS\ \nleqm\ \cloC_2$, $\cloS\ \nleqm\ \cloC_3$, and $\cloS\ \nleqm\ \cloB_2$. There is a minor homomorphism from $\cloI_2$ to $\cloS$.
\end{Cor}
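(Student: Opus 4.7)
The plan is to assemble the result as a direct combination of the three preceding theorems of this section, since the hypotheses of Corollary~\ref{thm:main} are precisely the hypotheses needed in each of them. First I would invoke Theorem~\ref{thm:GenMinority} on $\cloS$ to conclude that $\cloS\models\GM(n)$ for every odd $n\geq 3$, producing an infinite family of generalized minority operations. Next I would invoke Theorem~\ref{thm:TSn} on $\cloS$ to conclude that $\cloS\models\operatorname{TS}(n)$ for every $n\geq 2$, producing an infinite family of totally symmetric operations of every arity. At that point the two families of operations provided by these theorems satisfy exactly the syntactic requirements of Theorem~\ref{thm:new} (for $k=3$), so Theorem~\ref{thm:new} immediately supplies a minor homomorphism $\xi\colon\cloI_2\to\cloS$.

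There is essentially no obstacle, because the hypothesis list $\cloS\ \nleqm\ \cloC_2$, $\cloS\ \nleqm\ \cloC_3$, $\cloS\ \nleqm\ \cloB_2$ together with idempotency of $\cloS$ over $E_3$ appears verbatim in both Theorem~\ref{thm:GenMinority} and Theorem~\ref{thm:TSn}, and Theorem~\ref{thm:new} only needs the two syntactic conclusions those theorems produce. The only conceptual content worth pointing out is the reason each intermediate theorem was proved: Theorem~\ref{thm:GenMinority} supplies the "odd-arity skeleton" needed to interpret the $\oplus$ of Boolean polynomials, while Theorem~\ref{thm:TSn} supplies the symmetric operations needed to interpret the monomials $\bigwedge W_i$; Theorem~\ref{thm:new} then welds them together in the polynomial normal form of Lemma~\ref{Lem:polynomialRepresentation}. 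Hence the proof is a one-line chaining of references, with no additional calculation required.
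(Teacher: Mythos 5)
Your proposal is correct and follows exactly the paper's own proof: invoke Theorem~\ref{thm:GenMinority} for generalized minorities of every odd arity, Theorem~\ref{thm:TSn} for totally symmetric operations of every arity, and then conclude with Theorem~\ref{thm:new}. Nothing further is needed.
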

\begin{proof}
From Theorem~\ref{thm:GenMinority} we know that $\cloS$ has a generalized minority $m_\ell$, for every odd number $\ell\ge 3$. Moreover, from Theorem~\ref{thm:TSn} it follows that $\cloS$ has a totally symmetric operation $s_n$ of arity $n$, for every $n\geq 2$.
Thus, the claim follow from Theorem~\ref{thm:new}.
\end{proof}

\section{Conclusion}

The results presented in this article together with the work from~\cite{BodirskyVucajZhuk} give hope that a complete description of $\Pthree$ might be achievable. We conclude this article by stating three open problems, with the aim of suggesting a path leading to a full description of $\Pthree$.
\begin{problem}\label{prob:atoms}
Find all the atoms of $\Pthree$. Is every atom of $\Pthree$ of the form $\overline{\cloC}$, for some finitely related clone $\cloC$?
\end{problem}
Note that a positive solution to Problem~\ref{prob:atoms} would provide a concrete list of the hardest tractable CSPs over $\{0,1,2\}$, refining \cite{Bulatov-3-conf}.

By Corollary~\ref{thm:main} it immediately follows that $\overline{\cloC_2}$, $\overline{\cloC_3}$, and $\overline{\cloB_2}$ are exactly the submaximal elements of $\Pthree$.
Furthermore, in order to prove that $\Pthree$ has at most countably infinite many elements, we can even focus only on those clones for which there exists a minor homomorphism to $\cloB_2$. Indeed, we prove that every clone $\cloC$ over a finite set has a \emph{Mal'cev operation} (Theorem~\ref{thm:splittingMalcev}) if and only if there is no minor homomorphism from $\cloC$ to $\cloB_2$ and Bulatov~\cite{BulatovMalcev} proved that there are only finitely many clones over $\{0,1,2\}$ containing a Mal'cev operation.

\begin{problem}\label{prob:Malcev}
Find all elements of $\Pthree$ that are below $\overline{\cloC_2}$.
\end{problem}
It follows from Theorems~\ref{thm:main} and~\ref{thm:splittingMalcev} that all clones over $\{0,1,2\}$ with a Mal'cev operation are below $\overline{\cloC_2}$ or $\overline{\cloC_3}$ in $\Pthree$. Since all the elements of $\Pthree$ which are below $\overline{\cloC_3}$ were found in \cite{BodirskyVucajZhuk}, in order to solve Problem~\ref{prob:Malcev}, we have to consider all clones over $\{0,1,2\}$ with a Mal'cev operation and a cyclic operation of arity 3, and order them with respect to $\leqm$.
\begin{problem}\label{prob:B2}
Find all elements of $\Pthree$ that are below $\overline{\cloB_2}$.
\end{problem}
We would like to emphasise that, out of the three problems proposed in this section, Problem~\ref{prob:B2} is the more challenging one as, a priori, there might exist continuum many elements below $\overline{\cloB_2}$.

An alternative direction that this research strand can take is to investigate whether Corollary~\ref{thm:main} can be generalized to domains strictly larger than three. More precisely, one could ask if the following holds: for every $n>3$, if $\cloS$ is an idempotent clone over $E_n$ such that, for every prime $p\leq n$, it holds that $\cloS\ \nleqm\ \cloC_p$ and $\cloS\ \nleqm\ \cloB_2$, then there exists a minor homomorphism from $\cloI_2$ to $\cloS$. The latter statement was proved to be true if we only consider polymorphism clones of finite directed graphs~\cite{BodirskyStarke}. 

However, there is a structure in the literature that proves the statement to be false in general. Carvalho and Krokhin~\cite{CarvalhoKrokhin} -- for different purposes -- presented a structure $\mathbb{K}$ with 21 elements that has cyclic polymorphisms
of all arities, a Mal'cev polymorphism, and that does not have any symmetric polymorphism of arity 5. Note that, from Theorem~\ref{thm:splittingCycles} and $\Pol(\mathbb{K})\models\Sigma_p$ for every prime $p$, it follows that $\Pol(\mathbb{K})\ \nleqm\ \cloC_p$. Moreover, it follows from $\Pol(\mathbb{K})\models\Malcev$ and Theorem~\ref{thm:splittingMalcev} that $\Pol(\mathbb{K})\ \nleqm\ \cloB_2$. However, we have that $\cloI_2\ \nleqm\ \Pol(\mathbb{K})$, since $\mathcal{I}_2\models \operatorname{FS}(5)$ while $\Pol(\mathbb{K})\not\models \operatorname{FS}(5)$.

For the sake of full disclosure, the structure $\mathbb{K}$ is defined as follows: $\mathbb{K}\coloneqq (K; R, S)$, where $K = \{0,1,2,\dots,9,10,a,b,c,d,e,f,g,h,i,j\}$,
and $R$ and $S$ are binary relations that are the graphs of the following permutations $r$ and $s$, respectively (see Figure~\ref{fig:KrokhinCarvalho}),
\begin{align*}
    r &= (0\ 1\ 2)(5\ 6\ 7)(8\ 9\ 10)(e\ b\ a)(d\ g\ i)(f\ h\ c),
    \\s &= (1\ 4)(2\ 3)(5\ 6)(7\ 8)(j\ e)(b\ c)(a\ d)(i\ f).
\end{align*} 

\begin{figure}[H]
\begin{center}
\begin{tikzpicture}[scale=.7]
\draw[->][red] (-.1,3) arc [radius=.15, start angle=0,end angle=345];
\draw[->][blue] (1.6,4) arc [radius=.15, start angle=180,end angle=520];
\draw[->][blue] (2.1,3) arc [radius=.15, start angle=180,end angle=520];
\draw[->][red] (7.1,3) arc [radius=.15, start angle=180,end angle=520];
\draw[->][red] (6.5,4.1) arc [radius=.15, start angle=270,end angle=590];
\draw[red] (5,3) -- (6,3);
\draw[red] (4,3) to [bend left=40](4.5,4) ;
\draw[red] (.5,4) -- (1.5,4);
\draw [red] (1,3) -- (2,3);
\draw[->][blue] (0,3) -- (.45,3.95);
\draw[->][blue] (.5,4) -- (.95,3.05);
\draw[->][blue] (1,3) -- (.07,3);
\draw[->][blue] (4,3) -- (4.45,3.95);
\draw[->][blue] (4.5,4) -- (4.95,3.05);
\draw[->][blue] (5,3) -- (4.07,3);
\draw[->][blue] (6,3) -- (6.45,3.95);
\draw[->][blue] (6.5,4) -- (6.95,3.05);
\draw[->][blue] (7,3) -- (6.07,3);
\draw[fill] (0,3) circle [radius=0.05];
\draw[fill] (1,3) circle [radius=0.05];
\draw[fill] (2,3) circle [radius=0.05];
\draw[fill] (.5,4) circle [radius=0.05];
\draw[fill] (1.5,4) circle [radius=0.05];
\draw[fill] (4,3) circle [radius=0.05];
\draw[fill] (5,3) circle [radius=0.05];
\draw[fill] (4.5,4) circle [radius=0.05];
\draw[fill] (6,3) circle [radius=0.05];
\draw[fill] (7,3) circle [radius=0.05];
\draw[fill] (6.5,4) circle [radius=0.05];
\node[below] at (0,3) {\small 0};
\node[below] at (1,3) {\small 2};
\node[above] at (0.5,4) {\small 1};
\node[below] at (2,3) {\small 3};
\node[above] at (1.5,4) {\small 4};
\node[below] at (4,3) {\small 5};
\node[below] at (5,3) {\small 7};
\node[below] at (6,3) {\small 8};
\node[below] at (7,3) {\small 10};
\node[above] at (4.5,4) {\small 6};
\node[right] at (6.5,4) {\small 9};

\draw[->][blue] (-.1,0) arc [radius=.15, start angle=0,end angle=345];
\draw[->][red] (3.5,2.1) arc [radius=.15, start angle=270,end angle=590];
\draw[->][red] (4,-1.1) arc [radius=.15, start angle=-270,end angle=70];
\draw[red] (4,1) -- (4.5,0);
\draw[->][blue] (3,1) -- (3.92,1);
\draw[->][blue] (4,1) -- (3.55,1.95);
\draw[->][blue] (3.5,2) -- (3.05,1.05);
\draw [red] (0,0) -- (1,0);
\draw [red] (1.5,1) -- (3,1);
\draw[red] (2,0) -- (3.5,0);
\draw[->][blue] (1,0) -- (1.45,.95);
\draw[->][blue] (1.5,1) -- (1.95,.05);
\draw[->][blue] (2,0) -- (1.07,0);
\draw[->][blue]  (3.5,0) -- (3.95,-.95);
\draw[->][blue] (4,-1) -- (4.45,-.05);
\draw[->][blue] (4.5,0) -- (3.58,0);
\draw[fill] (0,0) circle [radius=0.05];
\draw[fill] (1,0) circle [radius=0.05];
\draw[fill] (2,0) circle [radius=0.05];
\draw[fill] (1.5,1) circle [radius=0.05];
\draw[fill] (3,1) circle [radius=0.05];
\draw[fill] (4,1) circle [radius=0.05];
\draw[fill] (3.5,2) circle [radius=0.05];
\draw[fill] (3.5,0) circle [radius=0.05];
\draw[fill] (4.5,0) circle [radius=0.05];
\draw[fill] (4,-1) circle [radius=0.05];

\node[below] at (0,0) {\small $j$};
\node[below] at (1,0) {\small $e$};
\node[below] at (2,0) {\small $a$};
\node[below] at (3,1) {\small $c$};
\node[above] at (1.5,1) {\small $b$};
\node[above] at (3.5,0) {\small $d$};
\node[right] at (4,1) {\small $f$};
\node[right] at (4.5,0) {\small $i$};
\node[right] at (4,-1) {\small $g$};
\node[right] at (3.5,2) {\small $h$};
\end{tikzpicture}
\caption{The structure $\mathbb{K}\coloneqq (K; {\color{blue}R}, {\color{red}S})$.}
\label{fig:KrokhinCarvalho}
\end{center}
\end{figure}

In the light of this, we conclude this article with the following conjecture.

\begin{Conj}\label{conj:submaximal}
Let $\cloS$ be a clone over $E_k$, for some $k\geq 2$, such that
\begin{itemize}
    \item $\cloS$ satisfies $\operatorname{TS}(n)$, for every $n\geq 2$, and
    \item $\cloS$ satisfies $\Malcev$.
\end{itemize} Then there exists a minor homomorphism from $\cloI_2$ to $\cloS$.
\end{Conj}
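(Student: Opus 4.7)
The plan is to follow the architecture of Section~\ref{sec:subOn3} and reduce the conjecture to Theorem~\ref{thm:new}. That theorem already supplies the desired minor homomorphism as soon as $\cloS$ contains a generalized minority of every odd arity in addition to totally symmetric operations of every arity, so the task reduces to producing $\GM(n)$ for every odd $n \geq 3$ from the hypotheses $\operatorname{TS}(n)$ for every $n \geq 2$ and $\Malcev$.

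First I would harvest the easy consequences of the hypotheses. Every totally symmetric operation is cyclic, hence $\cloS \models \Sigma_p$ for every prime $p \leq k$. Combined with the Mal'cev operation, Lemma~\ref{lem:majorityWithKey}, which is already stated for arbitrary $E_k$, yields a majority operation $M' \in \cloS$. Feeding this majority and the Mal'cev operation into the construction from the proof of Lemma~\ref{lem:SHasMinority}, and then symmetrising via the binary and ternary TS operations exactly as in \eqref{eq:majFS}, produces a symmetric minority $m \in \cloS$. None of these steps uses anything specific to $|E_3|=3$, so after this stage the hypotheses of the $E_3$ induction are in place for $E_k$ as well.

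The main obstacle is to generalise Theorem~\ref{thm:GenMinority} from $E_3$ to arbitrary $E_k$. The $E_3$ proof is organised around the auxiliary operation $D^c$, whose definition is possible precisely because a symmetric minority over $E_3$ must return a single constant $c$ on the unique 3-element subset $\{0,1,2\}$; over $E_k$ with $k\geq 4$ the symmetric minority $m$ may assign different values to each 3-subset $\{a,b,c\}\subseteq E_k$, so there is no canonical constant around which to build a switch. A natural remedy is to retain the uniform formula $D(x,y,z) := m(m(x,y,z),y,z)$, which is immediately seen to act as the first projection whenever $|\{x,y,z\}|\leq 2$, and then to replay the recursion from $m_{2n-1}$ to $m_{2n+1}$ given in the proof of Theorem~\ref{thm:GenMinority}, with the case analysis at \eqref{eq:odditionThree} repeated for each 3-subset of $E_k$ separately.

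The delicate point, and the one I expect to be the real obstacle, is the analogue of \eqref{eq:odditionThree}: in the $E_3$ argument the verification works because the constant $c$ is absorbed by $m_3^c$ whenever it occurs as an argument and because $D^c$ is literally the first projection on every 2-element subset. Reproducing this on $E_k$ requires that, on every 3-subset $\{a,b,c\}$, the symmetric minority $m$ restricted to that subset interacts with $D$ in the same rigid way. One route is to enrich $m$ using the Mal'cev operation and the higher TS operations until this coherence is forced, possibly by invoking Tame Congruence Theory as alluded to in the remark following Lemma~\ref{lem:majorityWithKey}. If a $D$-coherent symmetric minority can be manufactured in this way, the rest of the argument proceeds mutatis mutandis as in Section~\ref{sec:oddition} and Theorem~\ref{thm:new} closes the proof.
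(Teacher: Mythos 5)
This statement is a \emph{Conjecture} in the paper; the authors deliberately state it as an open problem and provide no proof. So there is nothing in the paper to compare your attempt against. The question is therefore only whether your proposal actually establishes the result, and it does not.

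Your reduction is the right one: granting $\operatorname{TS}(n)$ for all $n$, it suffices by Theorem~\ref{thm:new} to derive $\GM(n)$ for every odd $n$, and the preliminary steps you describe (cyclic operations of every prime arity from total symmetry, passing to an idempotent minor-equivalent clone via the core, then a majority via Lemma~\ref{lem:majorityWithKey} and a symmetric minority via the Mal'cev-plus-majority composition of Lemma~\ref{lem:SHasMinority} together with \eqref{eq:majFS}) do generalize to $E_k$ essentially verbatim. The paper even notes explicitly that the conjecture's hypothesis of quasi Mal'cev is strictly weaker than the $\GM$ hypothesis of Theorem~\ref{thm:new}, so the direction of the reduction is correct and the missing piece is precisely the analogue of Theorem~\ref{thm:GenMinority}.

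But you have not closed that piece; you have only named it. The proof of Theorem~\ref{thm:GenMinority} is built around the operation $D^c$ of \eqref{eq:DimaSwitch}, whose definition depends in an essential way on there being a \emph{single} constant $c\in E_3$ that the symmetric minority returns on the unique three-element subset of $E_3$. For $k\ge 4$ a symmetric minority assigns potentially different, possibly out-of-subset, values to each three-element subset of $E_k$, and moreover a generalized minority of higher arity must also be pinned down on tuples in which four or more elements each occur an odd number of times --- a phenomenon that simply does not arise over $E_3$. Neither the uniform formula $D(x,y,z)\coloneqq m(m(x,y,z),y,z)$ alone, nor the appeal to ``enriching $m$ via Tame Congruence Theory,'' amounts to an argument; your closing sentence (``If a $D$-coherent symmetric minority can be manufactured in this way\dots'') concedes that the construction is conjectural. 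In short: the proposal correctly identifies where the difficulty lies, but it supplies no mechanism to overcome it, so the conjecture remains exactly as open as the paper leaves it.
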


Note that Conjecture~\ref{conj:submaximal} implies Theorem~\ref{thm:new}: indeed, the assumption of having a quasi Mal'cev operation is a strictly weaker assumption than that of requiring the existence of a generalized minority of every odd arity.

\section*{Declarations}
Albert Vucaj is funded by the Austrian Science
Fund (FWF) [P 32337, I~5948]. For the purpose of Open Access, the author has
applied a CC BY public copyright licence to any Author Accepted
Manuscript (AAM) version arising from this submission. Albert Vucaj is also funded by the European Union (ERC, POCOCOP, 101071674). Views and
opinions expressed are however those of the author(s) only and do not
necessarily reflect those of the European Union or the European
Research Council Executive Agency. Neither the European Union nor the
granting authority can be held responsible for them.

Dmitriy Zhuk is funded by the European Union (ERC, POCOCOP, 101071674). Views and
opinions expressed are however those of the author(s) only and do not
necessarily reflect those of the European Union or the European
Research Council Executive Agency. Neither the European Union nor the
granting authority can be held responsible for them.

Conflict of interest: The authors declare that they have no relevant financial or non-financial interests to disclose.
\bibliographystyle{spmpsci.bst}
\def\cprime{$'$} \def\cprime{$'$}

\end{document}